\newtheorem{theorem}{Theorem}[section]
\newtheorem{lemma}[theorem]{Lemma}
\newtheorem{proposition}[theorem]{Proposition}
\theoremstyle{definition}
\newtheorem{definition}[theorem]{Definition}
\newtheorem{fact}[theorem]{Fact}
\newtheorem{remark}[theorem]{Remark}
\newtheorem{example}[theorem]{Example}
\newtheorem{ipg}[theorem]{}
\DeclareMathOperator{\Ext}{Ext}
\DeclareMathOperator{\Hom}{Hom}
\DeclareMathOperator{\Ho}{Ho}
\DeclareMathOperator{\rep}{Rep}
\DeclareMathOperator{\Proj}{Proj}
\DeclareMathOperator{\Inj}{Inj}
\DeclareMathOperator{\FPI}{FPI}
\DeclareMathOperator{\GProj}{GProj}
\DeclareMathOperator{\GInj}{GInj}
\DeclareMathOperator{\DProj}{DProj}
\DeclareMathOperator{\DInj}{DInj}
\newcommand{\rightperp}[1]{#1^{\perp}}
\newcommand{\leftperp}[1]{{}^\perp #1}
\newcommand{\class}[1]{\mathcal{#1}}   
\newcommand{\coker}{\textnormal{coker}\mspace{3mu}}
\newcommand{\sGProj}{\underline{\mathrm{GPro}}\mathrm{j}}
\newcommand{\sGInj}{\underline{\mathrm{GIn}}\mathrm{j}}
\newcommand{\sDProj}{\underline{\mathrm{DPro}}\mathrm{j}}
\newcommand{\sDInj}{\underline{\mathrm{DIn}}\mathrm{j}}
\begin{document}
\emergencystretch 3em

\title{Abelian model structures on categories of quiver representations}

\author{Georgios Dalezios}
\address{Departamento de Matem\'aticas, Universidad de Murcia, 30100 Murcia, Spain}
\email{georgios.dalezios@um.es}
\address{Department of Mathematical Sciences, University of Copenhagen, Universitets\-parken 5, 2100 Copenhagen {\O}, Denmark}

\thanks{The author is supported by the Fundaci\'{o}n S\'{e}neca of Murcia 19880/GERM/15.}
\date{}

\subjclass[2010]{18E10, 18G25, 18G55}

\keywords{cotorsion pairs, abelian model structures, quiver representations, Ding-Chen rings.}

\begin{abstract}
Let $\class  M$ be an abelian model category (in the sense of Hovey). For a large class of quivers, we describe associated abelian model structures on categories of quiver representations with values in $\class M$. This is based on recent work of Holm and J\o rgensen on cotorsion pairs in categories of quiver representations. An application on Ding projective and Ding injective representations of quivers over Ding-Chen rings is given.
\end{abstract}

\maketitle

\section{Introduction}
Model structures on abelian categories have been studied extensively by Hovey \cite{hovey}, who introduced the general notion of an abelian model structure on an abelian category $\class M$, and gave a correspondence between such models and certain cotorsion pairs in $\class M$. A cotorsion pair in an abelian category $\class M$ is a pair of $\Ext_{\class M}^{1}(-,-)$--orthogonal to each other subcategories. The basic idea behind Hovey's results is that, for an abelian model category $\class M$, the various lifting properties in the model $\class M$ can be interpreted as certain $\Ext_{\class M}^{1}(-,-)$--orthogonality relations. Thus in order to give a model structure on an abelian category, it suffices to find certain cotorsion pairs and then use the correspondence of Hovey.

Given an abelian model structure on an abelian category $\class M$ and a quiver (a directed graph) $Q$, we consider the category of quiver representations $\rep_{Q}\class M$, that is, diagrams of shape $Q$ in $\class M$, and study how the given model on $\class M$ transfers to the abelian category $\rep_{Q}\class M$. Representations of quivers in module categories are of interest in the representation theory of finite dimensional algebras \cite{rta}. Moreover, derived categories of the category $\rep_{Q}\class M$ are usually thought of as enhancements of the derived category of $\class M$ and have recently attracted much attention, see for instance \cite{MR3513838, MR3474336}. 

In general, for a given model category $\class M$ and a small category $I$, a model on the functor category $\class M^{I}$ might exist or not, depending on conditions on either $\class M$ or $I$, see \cite[Chapters 11, 15]{MR1944041}. In Theorems \ref{Thm-proj-model}/\ref{Thm-inj-model} we give a description of certain projective (resp. injective) model structures on categories of quiver representations, based on cotorsion pairs in such categories as obtained by Holm and J\o rgensen \cite{Holm-Jorgensen2016}. The examples we are interested in here are of a ring-theoretic flavour.
In \ref{ex-Gproj}/\ref{ex-GInj} we provide examples which realize stable categories of Gorenstein projective (resp. injective) representations of left (resp. right) rooted quivers, over certain rings, as Quillen homotopy categories. 

The last section is concerned with quiver representations over Ding-Chen rings, a generalization of Gorenstein rings studied by Gillespie \cite{MR2607410}. In Theorems \ref{ding-proj-model}/\ref{dinj-model} we provide abelian model structures for Ding projective and Ding injective representations over such rings, which generalize the analogous statements for Gorenstein rings from \ref{ex-Gproj}/\ref{ex-GInj}.

\section{Preliminaries}

In this section we briefly summarize some known facts on cotorsion pairs, abelian model structures and quiver representations.

\begin{ipg}\textbf{Cotorsion pairs.}
Let $\class M$ be an abelian category. For a class $\class C$ of objects in $\class M$ the right orthogonal $\rightperp{\class C}$ is defined to be the class of all $M \in \class M$ such that $\Ext^1_{\class M}(C,M) = 0$ for all $C \in \class C$. The left orthogonal $\leftperp{\class{C}}$ is defined analogously.
We say that a pair $(\class X,\class Y$) of classes of objects in $\class M$ is a \textit{cotorsion pair} if $\class X=\leftperp{\class{Y}}$ and $\class Y=\rightperp{\class{X}}$. 
A cotorsion pair $(\class X,\class Y)$ is called \textit{complete} if for every object $M$ in $\class M$ there exists a short exact sequence 
$0\rightarrow Y\rightarrow X\rightarrow M\rightarrow 0$ with $X\in\class X$ and $Y\in\class Y$, and also a short exact sequence 
$0\rightarrow M\rightarrow Y'\rightarrow X'\rightarrow 0$ with $X'\in\class X$  and $Y'\in\class Y$. It is called \textit{hereditary} if for all $X\in\class X, Y\in\class Y$ and $i\geq 1$,\,  $\Ext^{i}_{\class M}(X,Y)=0$. We refer to \cite{GobelTrlifaj} for the theory of cotorsion pairs.
Following the terminology of \cite[Dfn.~2.2.1]{GobelTrlifaj}, a cotorsion pair is said to be \textit{generated}, respectively \textit{cogenerated}, by a set of objects $\mathcal{S}$, if it is of the form $(\leftperp{(\rightperp{\mathcal{S})}},\rightperp{\mathcal{S}})$, respectively $(\leftperp{\class S},\rightperp{(\leftperp{\class S})})$.
\end{ipg}

\begin{ipg}\textbf{Abelian model structures.}
\label{h1}
Let $\class M$ be an abelian category. Following Hovey \cite[Dfn.~2.1]{hovey} we say that $\class M$ admits an \textit{abelian model structure} (or that $\class M$ is an abelian model category), if it admits a Quillen model structure \cite{modcat} where the (trivial) cofibrations are the monomorphisms with (trivially) cofibrant kernel, and the (trivial) fibrations are the epimorphisms with (trivially) fibrant kernel. If we denote by $\class C, \class F$ and $\class W$ the classes of cofibrant, fibrant and trivial (i.e.~weakly isomorphic to zero) objects in this model category, we obtain from \cite[Thm.~2.2]{hovey} two (functorially) complete cotorsion pairs $(\class C\cap\class W,\class F)$,  $(\class C,\class W\cap\class F)$ in the category $\class M$. Conversely, for classes of objects $\class C,\class W$ and $\class F$, where $\class W$ is thick, any two complete cotorsion pairs of the above form give rise to an abelian model structure on $\class M$, see again \cite[Thm.~2.2]{hovey}. We abbreviate by saying that $(\class C,\class W,\class F)$ is a \textit{Hovey triple} on the category $\class M$. 
\end{ipg}

\begin{ipg}
\label{h2}
A Hovey triple $(\class C,\class W,\class F)$ on an abelian category $\class M$ is called \textit{hereditary} if the corresponding complete cotorsion pairs $(\class C\cap\class W,\class F)$ and $(\class C,\class W\cap\class F)$ are hereditary. In this case, the category $\class C\cap\class F$ is Frobenius \cite[Prop.~5.2.(4)]{Gil2011} (where $\class C \cap \class F\cap\class W$ is the class projective--injective objects) and the homotopy category of the model category $\class M$ (in the classical sense) is canonically equivalent to the stable category of the Frobenius category $\class C\cap\class F$, we refer to \cite[Section~4.2]{Gil2011} for the details.
\end{ipg}

\begin{ipg}\textbf{Setup.}
\label{setup}
Throughout the text $\class M$ denotes an abelian category with enough projectives and injectives which satisfies the axioms AB4 and AB4*, that is, $\class M$ is bicomplete and such that any coproduct of monomorphisms in $\class M$ is a monomorphism, and dually any product of epimorphisms in $\class M$ is an epimorphism.
\end{ipg}

\begin{ipg}
\label{quiver}\textbf{Quivers.}
We recall that a quiver $Q=(Q_{0},Q_{1})$ is a directed graph $Q$ with set of vertices $Q_{0}$ and set of arrows $Q_{1}$. For $\alpha\in Q_{1}$ we denote by $s(\alpha)$ its source and by $t(\alpha)$ its target.
If $Q$ is a quiver and $\class X$ is a class of objects in $\class M$, then viewing $Q$ as a small category, we consider the category $\rep_{Q}\class X$ of diagrams of shape $Q$ in $\class X$. The objects of $\rep_{Q}\class X$ are also called $\class X$-valued representations of $Q$. For any such representation $X$ and any vertex $i\in Q_{0}$, there exist two natural maps
\[\bigoplus\limits_{\alpha:j\rightarrow i}X(j)\xrightarrow{\phi_{i}^{X}} X(i)\,\,\,\,\,\,\,\,\,\, \mbox{and}\,\,\,\,\,\,\,\,\,\, X(i)\xrightarrow{\psi_{i}^{X}} \prod\limits_{\alpha:i\rightarrow j}X(j).\]

For a quiver $Q$, consider, as in \cite[Section~4]{EEGR-injective-quivers}, a sequence of subsets of $Q_{0}$ defined by transfinite recursion as follows: 
Put $W_{0}:=\emptyset$, for a successor $\alpha=\beta+1$, put
\[W_{\alpha}:=\{i\in Q_{0}\, |\, i\,\, \mbox{is not the source of any arrow that has target outside of}\,\,  W_{\beta}\}\]
and for a limit ordinal $\alpha$ put $W_{\alpha}:=\cup_{\beta<\alpha}W_{\beta}$.

A quiver is called \textit{right rooted} if for some ordinal $\lambda$ we have $W_{\lambda}=Q_{0}$. From \cite[Section~4]{EEGR-injective-quivers} we have that $Q$ is right rooted if and only if it does not contain any path of the form $\bullet\rightarrow\bullet\rightarrow\bullet\rightarrow\cdots$. A dual definition and a (dual) characterization holds for \textit{left rooted} quivers, see \cite{MR2100360}.
\end{ipg}

\begin{ipg} 
\label{adjoints}\textbf{Adjoints of evaluation functors.}
Let $Q$ be a quiver, $i\in Q_{0}$ a vertex and let $\class A$ be a category that admits finite products and finite coproducts. We recall, for instance from \cite[3.7]{Holm-Jorgensen2016}, that the evaluation at $i$ functor $(-)(i):\rep_{Q}\class A\rightarrow\class A;\, X\mapsto X(i)$, admits a left adjoint $f_{i}$ and a right adjoint $g_{i}$ which are defined, on a vertex $j$, by the rules $f_{i}(M)(j):=\coprod\limits_{\alpha:i\rightarrow j} M$ and $g_{i}(M)(j)=\prod\limits_{\alpha:j\rightarrow i} M$ respectively. We refer to \cite[Section~3]{Holm-Jorgensen2016} for the full definition and properties of these functors.
\end{ipg}

We start by recalling some of the main results of \cite{Holm-Jorgensen2016}.

\begin{fact} \cite[Thm.~A]{Holm-Jorgensen2016}
\label{fact1}
Let $Q$ be a left rooted quiver and let $\class M$ be abelian category as in setup \ref{setup}. If $(\class A,\class B)$ is a cotorsion pair in $\class M$, then there is an induced cotorsion pair $(\Phi(\class A), \rep_{Q}B)$ in the category $\rep_{Q}\class M$, where 
\[\Phi(\class A):=\{X\, |\,\, \forall i\in Q_{0},\,\, \phi_{i}^{X}\,\, \mbox{is monic with}\,\, X(i)\in\class A,\,\, \coker\phi_{i}^{X}\in\class A\}.\]
In addition, if $(\class A,\class B)$ is hereditary or generated by a set, then so is $(\Phi(\class A), \rep_{Q}\class B)$.
\end{fact}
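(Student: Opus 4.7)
The plan is to use the transfinite structure of a left rooted quiver to reduce Ext computations in $\rep_{Q}\class M$ to Ext computations in $\class M$, via the adjoints $f_{i}$ and $g_{i}$ of the evaluation functors. Dually to the $W_{\alpha}$-filtration of $Q_{0}$ recalled in the excerpt for right rooted quivers, left rootedness of $Q$ provides a transfinite filtration $\emptyset = V_{0} \subseteq V_{1} \subseteq \cdots$ with $V_{\lambda} = Q_{0}$ for some ordinal $\lambda$, where $V_{\alpha+1}$ adjoins those vertices whose predecessors all lie in $V_{\alpha}$. Any $X \in \rep_{Q}\class M$ then carries a canonical filtration $\{X_{\alpha}\}$ by subrepresentations ``generated'' by the values of $X$ at the vertices of $V_{\alpha}$.

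First I would verify orthogonality: for $X \in \Phi(\class A)$ and $Y \in \rep_{Q}\class B$ I want $\Ext^{1}_{\rep_{Q}\class M}(X, Y) = 0$. The successive quotients $X_{\alpha+1}/X_{\alpha}$ are supported on $V_{\alpha+1}\setminus V_{\alpha}$, and at such a vertex $i$ the value identifies with $\coker\phi_{i}^{X}$, because all predecessors of $i$ lie in $V_{\alpha}$, so the sum of their images in $X(i)$ is exactly $\im\phi_{i}^{X}$. Combined with the monicity assumption on $\phi_{i}^{X}$, this recognizes $X_{\alpha+1}/X_{\alpha}$ as a coproduct of pieces of the form $f_{i}(\coker\phi_{i}^{X})$ with $\coker\phi_{i}^{X} \in \class A$. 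The adjunction $f_{i} \dashv (-)(i)$, together with exactness of $f_{i}$ (which holds in any AB4 category since $f_{i}(M)(j)$ is a coproduct of copies of $M$), gives the Ext-adjunction identity $\Ext^{n}_{\rep_{Q}\class M}(f_{i}(A), Y) \cong \Ext^{n}_{\class M}(A, Y(i))$, which vanishes for $A \in \class A$ and $Y(i) \in \class B$. Eklof's lemma then propagates the vanishing along the filtration to $X$.

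For the two equality statements, I would test against the adjoint objects $f_{i}, g_{i}$. A candidate $Y \in \Phi(\class A)^{\perp}$ tested against $f_{i}(A)$ for $A \in \class A$ (which one checks lies in $\Phi(\class A)$) yields $Y(i) \in \class B$ at every vertex, hence $Y \in \rep_{Q}\class B$. A candidate $X \in {}^{\perp}(\rep_{Q}\class B)$ tested against $g_{i}(B)$ for $B \in \class B$, combined with the completeness of $(\class A, \class B)$ and suitable short exact sequences, supplies all three conditions defining $\Phi(\class A)$: the monicity of $\phi_{i}^{X}$ and membership $X(i), \coker\phi_{i}^{X} \in \class A$. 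Preservation of the ``generated by a set'' property is then automatic: if $\class S$ generates $(\class A, \class B)$, then $\{f_{i}(S) : S \in \class S,\, i \in Q_{0}\}$ generates $(\Phi(\class A), \rep_{Q}\class B)$. The hereditary case follows from the higher-Ext form of the same adjunction identity, together with the observation that $\Phi(\class A)$ is closed under the relevant short exact sequences when $\class A$ is.

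The main obstacle is the filtration step: one must simultaneously verify that the $X_{\alpha}$ are well-defined subrepresentations of $X$, identify their successive quotients with coproducts of $f_{i}$-pieces, and argue that the only cokernels appearing are precisely the $\coker\phi_{i}^{X}$ rather than arbitrary quotients of $X(i)$. Left rootedness is what makes all of this function: it ensures that the $V_{\alpha}$-filtration exhausts $Q_{0}$, and the defining property of $V_{\alpha+1}$ (all predecessors in $V_{\alpha}$) is precisely what forces the quotient at a vertex $i$ to collapse onto $\coker\phi_{i}^{X}$. Without left rootedness, the filtration would stall and the inductive Ext-vanishing would break down.
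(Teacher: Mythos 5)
The paper does not actually prove this Fact; it is recorded verbatim from \cite[Thm.~A]{Holm-Jorgensen2016} and cited as such, so there is no internal proof to measure you against. Evaluated on its own merits, your sketch does identify the strategy used in the cited reference: exhibit a continuous transfinite filtration of each $X \in \Phi(\class A)$ indexed by the $V_\alpha$-sequence, identify the successive quotients with coproducts $\bigoplus_{i} f_{i}(\coker\phi_{i}^{X})$ (the monicity of the $\phi_{i}^{X}$ is exactly what guarantees the filtration pieces are genuine subrepresentations with these quotients), use the Ext-adjunction $\Ext^{n}_{\rep_{Q}\class M}(f_{i}(A),Y)\cong\Ext^{n}_{\class M}(A,Y(i))$ (valid since the evaluation functor is exact, hence its left adjoint $f_{i}$ preserves projectives), and conclude by Eklof's lemma. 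Your treatment of $\Phi(\class A)^{\perp}\subseteq\rep_{Q}\class B$ via testing against $f_{i}(A)$ and of the ``generated by a set'' clause via $f_{*}(\class S)$ are likewise correct.

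There is one genuine gap. When arguing ${}^{\perp}(\rep_{Q}\class B)\subseteq\Phi(\class A)$ you invoke ``the completeness of $(\class A,\class B)$,'' but the Fact does not assume the cotorsion pair is complete -- it is stated for an arbitrary cotorsion pair in $\class M$, and the paper only brings in completeness later, in Fact~\ref{Odabasi}, as a separate hypothesis. What is actually available, and what the argument must lean on, is the standing hypothesis from Setup~\ref{setup} that $\class M$ has enough injectives (and projectives): every injective of $\class M$ automatically lies in $\class B$, so one can still manufacture test representations in $\rep_{Q}\class B$ and the short exact sequences needed to extract monicity of $\phi_{i}^{X}$ and membership of $\coker\phi_{i}^{X}$ in $\class A$. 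As written, your step silently strengthens the hypotheses of the theorem, which would make the induced-cotorsion-pair statement circular when it is later combined with the completeness result it is supposed to feed into.
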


The dual of this statement is as follows: 

\begin{fact}\cite[Thm.~B]{Holm-Jorgensen2016}
\label{fact2}
Let $Q$ be a right rooted quiver and let $\class M$ be abelian category as in setup \ref{setup}. If $(\class A,\class B)$ is a cotorsion pair in $\class M$, then there is an induced cotorsion pair $(\rep_{Q}\class A,\Psi(\class B))$ in the category $\rep_{Q}\class M$, where
\[\Psi(\class B):=\{X\, |\,\, \forall i\in Q_{0},\,\, \psi_{i}^{X}\,\, \mbox{is epic with}\,\,  X(i)\in\class B,\,\, \ker\psi_{i}^{X}\in\class B\}.\]
In addition, if $(\class A,\class B)$ is hereditary or generated by a set, then so is $(\rep_{Q}\class A,\Psi(\class B))$.
\end{fact}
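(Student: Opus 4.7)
The plan is to derive Fact \ref{fact2} from Fact \ref{fact1} by dualization, which is feasible because Setup \ref{setup} is self-dual: $\class M^{\mathrm{op}}$ is again bicomplete with enough projectives and injectives, and AB4 for $\class M^{\mathrm{op}}$ is exactly AB4* for $\class M$ and vice versa. A quiver $Q$ is right rooted precisely when $Q^{\mathrm{op}}$ is left rooted, and there is a natural equivalence $\rep_Q\class M \simeq (\rep_{Q^{\mathrm{op}}}\class M^{\mathrm{op}})^{\mathrm{op}}$ sending $X$ to the representation $X^{\mathrm{op}}$ with $X^{\mathrm{op}}(i) = X(i)^{\mathrm{op}}$. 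A cotorsion pair $(\class A,\class B)$ in $\class M$ corresponds under this duality to $(\class B^{\mathrm{op}},\class A^{\mathrm{op}})$ in $\class M^{\mathrm{op}}$, because $\Ext^1_{\class M^{\mathrm{op}}}$ swaps its arguments relative to $\Ext^1_{\class M}$.

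Applying Fact \ref{fact1} to the left-rooted quiver $Q^{\mathrm{op}}$ and to the cotorsion pair $(\class B^{\mathrm{op}},\class A^{\mathrm{op}})$ in $\class M^{\mathrm{op}}$ produces a cotorsion pair $(\Phi(\class B^{\mathrm{op}}),\, \rep_{Q^{\mathrm{op}}}\class A^{\mathrm{op}})$ in $\rep_{Q^{\mathrm{op}}}\class M^{\mathrm{op}}$. Transporting this back along the equivalence above yields a cotorsion pair in $\rep_Q\class M$ whose left class is $\rep_Q\class A$; it then remains only to identify the right class with $\Psi(\class B)$. For this, one traces through the definitions: an arrow $\alpha\colon i\to j$ in $Q$ becomes an arrow $\alpha^{\mathrm{op}}\colon j\to i$ in $Q^{\mathrm{op}}$, and coproducts in $\class M^{\mathrm{op}}$ are products in $\class M$, so the map $\phi_i^{X^{\mathrm{op}}}$ in $\class M^{\mathrm{op}}$ is exactly the opposite of $\psi_i^X$ in $\class M$. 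Consequently ``$\phi_i^{X^{\mathrm{op}}}$ monic with cokernel in $\class B^{\mathrm{op}}$'' translates to ``$\psi_i^X$ epic with kernel in $\class B$'', and membership $X^{\mathrm{op}}(i)\in\class B^{\mathrm{op}}$ translates to $X(i)\in\class B$, recovering $\Psi(\class B)$ exactly.

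Preservation of the hereditary property is automatic, since $\Ext^{\geq 1}$-vanishing is symmetric in its arguments. The clause about being generated by a set is the only point I expect to require care, because under duality ``generated by $\mathcal{S}$'' becomes ``cogenerated by $\mathcal{S}^{\mathrm{op}}$''. I would either transport the generating set of the representation-level cotorsion pair produced by Fact \ref{fact1} directly using the adjoints $f_i,g_i$ of \ref{adjoints} (which are precisely the mechanism through which the generating set in $\rep_{Q^{\mathrm{op}}}\class M^{\mathrm{op}}$ is built out of $\mathcal{S}^{\mathrm{op}}$), or unpack the proof of Fact \ref{fact1} in the dual setting to produce a generating set for $(\rep_Q\class A,\Psi(\class B))$ from a generating set for $(\class A,\class B)$. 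This bookkeeping is the main obstacle; the rest of the argument is formal.
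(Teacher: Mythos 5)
The paper offers no proof of Fact~\ref{fact2} --- it is cited directly from Holm--J{\o}rgensen, with the paper merely labelling it ``the dual'' of Fact~\ref{fact1} --- so there is no in-paper argument to compare against. Your dualization argument is sound for the bulk of the statement: Setup~\ref{setup} is indeed self-dual, $Q$ right rooted iff $Q^{\mathrm{op}}$ left rooted, $\rep_Q\class M\simeq(\rep_{Q^{\mathrm{op}}}\class M^{\mathrm{op}})^{\mathrm{op}}$, the translation $\phi^{X^{\mathrm{op}}}_i \leftrightarrow (\psi^X_i)^{\mathrm{op}}$ is exactly right, and hereditarity dualizes since $\Ext^{\geq 1}$-vanishing is a symmetric condition. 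So the existence of the induced cotorsion pair $(\rep_Q\class A,\Psi(\class B))$ and the hereditary clause follow cleanly.

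The genuine gap is the one you yourself flag: the ``generated by a set'' clause does not dualize, since (using the paper's G\"obel--Trlifaj convention) dualizing a set-\emph{generated} cotorsion pair in $\class M$ gives a set-\emph{cogenerated} one in $\class M^{\mathrm{op}}$, and vice versa. Neither of your proposed repairs closes the gap. The first repair fails for a concrete reason: if $(\class A,\class B)$ is generated by $\mathcal S$, i.e.\ $\class B=\rightperp{\mathcal S}$, then for the set $f_*(\mathcal S)=\{f_iS:i\in Q_0,\,S\in\mathcal S\}$ the adjunction isomorphism $\Ext^1_{\rep_Q\class M}(f_iS,X)\cong\Ext^1_{\class M}(S,X(i))$ (valid since $f_i$ is exact by AB4 and left adjoint to the exact evaluation functor) yields $\rightperp{f_*(\mathcal S)}=\rep_Q\class B$. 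But the right-hand class of the cotorsion pair we need is $\Psi(\class B)$, which is in general a \emph{proper} subclass of $\rep_Q\class B$; so $f_*(\mathcal S)$ generates the wrong cotorsion pair, namely one sitting over $(\Phi(\class A),\rep_Q\class B)$ territory, not $(\rep_Q\class A,\Psi(\class B))$. The second repair --- ``unpack the proof of Fact~\ref{fact1} in the dual setting'' --- also does not work as stated, because the Holm--J{\o}rgensen proof that $(\Phi(\class A),\rep_Q\class B)$ is set-generated dualizes to a proof that $(\rep_Q\class A,\Psi(\class B))$ is set-\emph{cogenerated}, not set-generated. In the source, the set-generation clause of their Theorem B is established by a separate, non-dual argument (a deconstructibility/filtration argument producing extra generators beyond $f_*(\mathcal S)$ to carve $\Psi(\class B)$ out of $\rep_Q\class B$); this is the piece your proposal is missing.
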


The following is the main result of \cite{odabasi-completeness} and addresses the question of when the cotorsion pairs found in \ref{fact1} and \ref{fact2} are complete.

\begin{fact} \cite[Thm.~4.1.3]{odabasi-completeness}
\label{Odabasi}
Let $\class M$ be an abelian category as in setup \ref{setup}\footnote{In fact, as it follows from \cite{odabasi-completeness}, even less assumptions might be considered, see \cite[3.11, 3.12]{odabasi-completeness}.} and let  $(\class A,\class B)$ be a complete cotorsion pair in $\class M$. Then the following hold:
\begin{itemize}
\item[-] If $Q$ is a left rooted quiver, then the induced cotorsion pair $(\Phi(\class A), \rep_{Q}\class B)$ in $\rep_{Q}\class A$ (which exists by \ref{fact1}) is complete. 
\item[-] If $Q$ is a right rooted quiver, then the induced cotorsion pair $(\rep_{Q}\class A,\Psi(\class B))$ in $\rep_{Q}\class A$ (which exists by \ref{fact2}) is complete. 
\end{itemize} 
\end{fact}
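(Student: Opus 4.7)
My plan is to focus on the second, right-rooted case; the first follows by the evident duality (arrow reversal) of the setup and of Facts \ref{fact1} and \ref{fact2}. Fix a complete cotorsion pair $(\class A,\class B)$ in $\class M$, a right-rooted quiver $Q$, and an arbitrary $M\in\rep_Q\class M$. The task is to build, witnessing completeness of $(\rep_Q\class A,\Psi(\class B))$, two short exact sequences in $\rep_Q\class M$: one of the form $0\to Y\to X\to M\to 0$ with $X\in\rep_Q\class A$ and $Y\in\Psi(\class B)$, and a dual one $0\to M\to Y'\to X'\to 0$.

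The construction proceeds by transfinite induction along the filtration $\emptyset=W_0\subseteq W_1\subseteq\cdots\subseteq W_\lambda=Q_0$ given by right-rootedness. For each ordinal $\alpha\leq\lambda$, let $Q_\alpha$ denote the full subquiver of $Q$ with vertex set $W_\alpha$; compatibly along the inclusions $Q_\alpha\hookrightarrow Q_{\alpha+1}$, I construct approximation sequences for $M|_{Q_\alpha}$ in $\rep_{Q_\alpha}\class M$ whose outer terms satisfy the class conditions analogous to $\rep_Q\class A$ and $\Psi(\class B)$ relative to $Q_\alpha$. The successor step is the heart of the argument: at a new vertex $i\in W_{\alpha+1}\setminus W_\alpha$ the defining property of the filtration guarantees that every arrow out of $i$ already lands in $W_\alpha$, so the values of the previously constructed representations at these targets are available. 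One obtains a canonical map from $M(i)$ into the product of those values and then applies the completeness of $(\class A,\class B)$ in $\class M$, combined with a pullback construction, to extract the new values at $i$ together with the new structure maps. Limit stages are handled by passing to the evident inverse limit, with AB4 and AB4* used to keep the requisite exactness.

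The main difficulty is the successor step, where several properties have to be verified simultaneously at $i$: local short-exactness $0\to Y(i)\to X(i)\to M(i)\to 0$ with $X(i)\in\class A$ and $Y(i)\in\class B$, surjectivity of the new structure map $\psi_i^Y$ into $\prod_{i\to j}Y(j)$, and membership of its kernel in $\class B$. The closure of $\class B$ under products (a consequence of $\class B=\rightperp{\class A}$ together with AB4*) and of $\class A$ under coproducts (by AB4), together with a horseshoe-style diagram chase, are the abstract ingredients that let the output of the completeness of $(\class A,\class B)$ in $\class M$ be glued into a genuine representation lying in $\Psi(\class B)$. The dual approximation sequence $0\to M\to Y'\to X'\to 0$ is then built by the obvious dualization within the same inductive scheme.
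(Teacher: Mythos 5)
The paper does not prove this statement itself; it is cited verbatim from Odaba\c{s}{\i}'s work, so there is no in--paper proof to compare against. Judged on its own, the core of your plan — a transfinite induction along the filtration $(W_\alpha)$, with a pullback against $\prod_{i\to j}A(j)\twoheadrightarrow\prod_{i\to j}M(j)$ followed by an application of completeness of $(\class A,\class B)$ at each successor vertex, and AB4/AB4\textsuperscript{*} to keep products and coproducts exact — is the right idea for producing \emph{one} of the two short exact sequences, namely the special precover $0\to B\to A\to M\to 0$ with $A\in\rep_Q\class A$, $B\in\Psi(\class B)$ in the right-rooted case. (A small terminological quibble: at a limit ordinal $\alpha$ nothing new is built; since $Q_\alpha=\bigcup_{\beta<\alpha}Q_\beta$ the representation is already determined by compatibility, so ``inverse limit'' is not really what happens.)

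The genuine gap is the final sentence, where you claim the second sequence $0\to M\to Y'\to X'\to 0$ is ``built by the obvious dualization within the same inductive scheme.'' This is not the case, and the obstruction is concrete. Fix a new vertex $i$ with outgoing targets already handled and write $V=\prod_{i\to j}Y'(j)$ and $\nu\colon M(i)\to V$ for the induced map. Any candidate $Y'(i)$ must simultaneously admit a monomorphism $M(i)\hookrightarrow Y'(i)$ with cokernel $X'(i)\in\class A$, an epimorphism $Y'(i)\twoheadrightarrow V$ with kernel $K\in\class B$, and the compatibility $M(i)\to Y'(i)\to V=\nu$. A short diagram chase then forces an exact sequence $0\to K/\ker\nu\to X'(i)\to\coker\nu\to 0$; since $\coker\nu$ is a quotient of the product of the $Y'(j)\in\class B$ and $K/\ker\nu$ is a quotient of $K\in\class B$, there is no reason for $X'(i)$ to land in $\class A$. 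Indeed the most natural attempt, $Y'(i)=B_0\oplus V$ with $0\to M(i)\to B_0\to A_0\to 0$ a special $\class B$-preenvelope in $\class M$, produces $X'(i)\cong A_0\oplus V\notin\class A$. The duality you invoke is the \emph{arrow-reversal} duality between left-rooted/$\Phi$ and right-rooted/$\Psi$; it exchanges the precover of one statement with the preenvelope of the other, not the precover with the preenvelope of the same statement. The correct way to obtain the second sequence, once the first is in hand, is Salce's lemma, which applies because $\rep_Q\class M$ has enough injectives (respectively projectives) under Setup \ref{setup}. So your outline should be amended: build the special precover by the transfinite scheme you describe, then invoke Salce's lemma (or its dual) for the special preenvelope, rather than attempting to dualize the induction in place.
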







\section{Abelian model structures on categories of quiver representations}

Based on the results stated in the previous section, we describe here a general recipe in order to produce abelian model structures on the category $\rep_{Q}\class M$, where $\class M$ is as in the setup \ref{setup}, $Q$ is left rooted  and $(\class C, \class W,\class F)$ is a hereditary Hovey triple on the ``ground category'' $\class M$. The associated complete hereditary cotorsion pairs in $\class M$ are $(\class C\cap\class W,\class F)$ and $(\class C,\class W\cap\class F)$. Using \ref{fact1} and \ref{Odabasi} we obtain two hereditary and complete cotorsion pairs in $\rep_{Q}\class M$,

\begin{equation}
(\widetilde{\class Q}=\Phi(\class C\cap\class W),\class R= \rep_{Q}\class F)\,\,\,\,\,\,  \mbox{and}\,\,\,\,\,\,    (\class Q=\Phi(\class C), \widetilde{\class R}=\rep_{Q}(\class F\cap\class W)) .
\end{equation}


We want to check if the above cotorsion pairs induce an abelian model structure on $\rep_{Q}\class M$. The following result of \cite{Gillespie-how-to} gives conditions on two complete cotorsion pairs in an abelian category $\class A$ in order for them to constitute a Hovey triple.

\begin{fact}\cite[Thm. 1.1]{Gillespie-how-to}
\label{fact-gil-how-to}
Let $\class M$ be an abelian category and assume that $(\widetilde{\class Q},\class R)$ and $(\class Q,\widetilde{\class R})$ are two hereditary complete cotorsion pairs on $\class M$ such that
\begin{itemize}
\item[(i)] $\widetilde{\class R}\subseteq\class R$\,  and\,  $\widetilde{\class Q}\subseteq\class Q$.
\item[(ii)] $\widetilde{\class R}\cap\class Q=\widetilde{\class Q}\cap\class R$.
\end{itemize}
Then there is a unique thick class $\class T$ for which $(\class Q,\class T,\class R)$ is a Hovey triple. Moreover, this class can be described as follows:
\begin{eqnarray}
\class T &=&\{X\in\mathcal M\, |\, \mbox{there exists a s.e.s.}\,\,\,  X\rightarrowtail R\twoheadrightarrow Q\,\,\,  \mbox{with}\,\,\,  R\in\widetilde{\class R},\, Q\in\widetilde{\class Q}\} \nonumber \\
&=&\{X\in\mathcal M\, |\, \mbox{there exists a s.e.s.}\,\,\,  R'\rightarrowtail Q'\twoheadrightarrow X\,\,\,  \mbox{with}\,\,\,  R'\in\widetilde{\class R},\, Q'\in\widetilde{\class Q}\}. \nonumber
\end{eqnarray}
\end{fact}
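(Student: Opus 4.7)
My goal is to set $(\class C,\class W,\class F):=(\class Q,\class T,\class R)$ and apply Hovey's correspondence \ref{h1}; this means identifying the two hereditary complete cotorsion pairs of the hypothesis with $(\class C\cap\class W,\class F)=(\widetilde{\class Q},\class R)$ and $(\class C,\class W\cap\class F)=(\class Q,\widetilde{\class R})$, and verifying that $\class T$ is thick. Concretely, I need to show (a) the two descriptions of $\class T$ coincide, (b) $\class T\cap\class Q=\widetilde{\class Q}$ and $\class T\cap\class R=\widetilde{\class R}$, and (c) $\class T$ is thick. Uniqueness of $\class T$ will follow from (b) combined with thickness, since the two intersections already pin down which objects lie in any candidate.

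\textbf{Steps (a) and (b).} For (a), given $X$ with $0\to X\to R\to Q\to 0$ in the first sense, completeness of $(\class Q,\widetilde{\class R})$ furnishes $0\to R^{\circ}\to Q^{\circ}\to X\to 0$ with $Q^{\circ}\in\class Q$ and $R^{\circ}\in\widetilde{\class R}$; splicing and arranging the resulting four-term exact sequence into a pullback/pushout diagram, together with hereditary-ness of $(\widetilde{\class Q},\class R)$, forces $Q^{\circ}\in\widetilde{\class Q}$. For (b), the only nontrivial inclusion is $\class T\cap\class Q\subseteq\widetilde{\class Q}$. Given $X\in\class T\cap\class Q$, pick $0\to R^{\circ}\to Q^{\circ}\to X\to 0$ from the second description; since $X\in\class Q$ and $R^{\circ}\in\widetilde{\class R}$, orthogonality in the cotorsion pair $(\class Q,\widetilde{\class R})$ yields $\Ext^{1}(X,R^{\circ})=0$, so the sequence splits and $X$ is a summand of $Q^{\circ}\in\widetilde{\class Q}$, hence itself in $\widetilde{\class Q}$. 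The identification $\class T\cap\class R=\widetilde{\class R}$ is formally dual, using a first-type presentation of $X\in\class T\cap\class R$ and hypothesis (ii) to place the quotient $Q$ in $\widetilde{\class Q}\cap\class R=\widetilde{\class R}\cap\class Q$ before invoking orthogonality on the other side.

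\textbf{Step (c) and main obstacle.} For the $2$-out-of-$3$ property in a short exact sequence $0\to X_{1}\to X_{2}\to X_{3}\to 0$ with two terms in $\class T$, my strategy is to pick presentations of those two terms, fit them into a horseshoe-style $3\times 3$ diagram, and use that $\widetilde{\class Q}$ and $\widetilde{\class R}$ are each closed under extensions (they are halves of hereditary cotorsion pairs) to extract a presentation of the third; the two non-extension cases further exploit the intersection identities from step (b). Closure under direct summands is the step I expect to be genuinely delicate: the naive pushout of a first-type presentation of $X_{1}\oplus X_{2}$ along the split projection $X_{1}\oplus X_{2}\to X_{1}$ produces $0\to X_{1}\to R^{\ast}\to Q\to 0$, but verifying $R^{\ast}\in\widetilde{\class R}$ is not automatic and appears to require combining both descriptions of $\class T$ together with hypothesis (ii), which controls exactly the overlap between the two cotorsion pairs. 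This coordinated use of completeness, heredity, and (ii) in the summand case is the point where the bulk of the technical work lies, and is the reason hypothesis (ii) is essential; without it, the relevant pushouts and pullbacks exit $\widetilde{\class R}$ or $\widetilde{\class Q}$ and the argument collapses.
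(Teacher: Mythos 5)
The paper does not prove this Fact; it is quoted verbatim, with citation, from Gillespie's \emph{How to construct a Hovey triple from two cotorsion pairs}. So there is no ``paper's own proof'' to compare against, and I can only assess your argument on its own merits. Your overall skeleton --- (a) equate the two descriptions of $\class T$, (b) show $\class Q\cap\class T=\widetilde{\class Q}$ and $\class R\cap\class T=\widetilde{\class R}$, (c) prove $\class T$ thick, then invoke Hovey's correspondence --- is indeed the shape of Gillespie's proof, and your argument in (b) (the split exact sequence from orthogonality of $(\class Q,\widetilde{\class R})$) is correct and standard.

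However, step (a) as written has a real gap. You apply completeness of $(\class Q,\widetilde{\class R})$ directly to $X$, obtain $0\to R^{\circ}\to Q^{\circ}\to X\to 0$ with $Q^{\circ}\in\class Q$, $R^{\circ}\in\widetilde{\class R}$, and then claim that ``splicing ... together with hereditary-ness of $(\widetilde{\class Q},\class R)$ forces $Q^{\circ}\in\widetilde{\class Q}$.'' But $Q^{\circ}\in\widetilde{\class Q}$ amounts to $\Ext^{1}(Q^{\circ},Y)=0$ for all $Y\in\class R$, and applying $\Hom(-,Y)$ to your sequence gives a map $\Ext^{1}(Q^{\circ},Y)\to\Ext^{1}(R^{\circ},Y)$ whose target you cannot control: $R^{\circ}\in\widetilde{\class R}$ and $Y\in\class R$ live on the same side of their respective pairs, and $\Ext^{1}(\widetilde{\class R},\class R)$ need not vanish (e.g.\ in the Gorenstein case, $\widetilde{\class R}$ is finite projective dimension and $\class R$ is everything). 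Similarly $\Ext^{1}(X,Y)\cong\Ext^{1}(R,Y)$ with $R\in\widetilde{\class R}$, $Y\in\class R$, which also need not vanish. No amount of splicing the two sequences directly fixes this. The actual argument is more roundabout: one first applies completeness to a \emph{term} of the given presentation (e.g.\ to $Q\in\widetilde{\class Q}$ via $(\widetilde{\class Q},\class R)$), uses heredity to put the kernel in $\class Q\cap\class R$ and then hypothesis (ii) to land it in $\widetilde{\class Q}\cap\widetilde{\class R}$, forms a pullback to obtain a ``better'' presentation, and iterates. Your sketch omits exactly the step where (ii) enters to control the intermediate object, and that is the crux of the lemma. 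The retract-closure part of step (c) you flag as delicate but leave unresolved --- your naive pushout does not produce $R^{\ast}\in\widetilde{\class R}$, as you note, and the fix (Gillespie's argument uses both pullbacks $P_i=\widetilde{Q}\times_{W}W_i$ along the split inclusions $W_i\hookrightarrow W$, plus the already-established two-out-of-three and the description of $\class T$) is not something one can wave at. As a proposal this correctly identifies where the difficulty lies, but the two hardest steps are not actually carried out.
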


For the cotorsion pairs in $(1)$, the only nontrivial relation is $\widetilde{\class R}\cap\class Q\subseteq\widetilde{\class Q}\cap\class R$. If $X\in\widetilde{\class R}\cap\class Q$, there is a short exact sequence 
\[0\rightarrow\bigoplus_{\alpha:j\rightarrow i}X(j)\xrightarrow{\phi_{i}^{X}} X(i)\rightarrow\coker\phi_{i}^{X}\rightarrow 0\]
where $\coker\phi_{i}^{X}\in\class C$ and $X(i)\in\class C\cap\class F \cap\class W$. Note that $X\in\class R=\rep_{Q}\class F$ trivially and that $X\in\widetilde{\class Q}=\Phi(\class C\cap\class W)$ if and only if $\coker\phi_{i}^{X}\in\class W$. Since $\class W$ is a thick subcategory of $\class M$, by the short exact sequence above, we see that $\coker\phi_{i}^{X}\in\class W$ if $\class W$ is closed under (small) coproducts. We point out that this condition will be automatically satisfied for all finite and also many infinite quivers. For example, for quivers $Q$ such that for all $i\in Q_{0}$, the set $\{s(\alpha)\,|\, \alpha\in Q_{1}\,\, \mbox{with}\,\,\, t(\alpha)=i\}$ is finite. 

The above discussion proves the following:

\begin{proposition}
\label{model-phi}
Let $\class M$ be an abelian category as in setup \ref{setup} with a hereditary Hovey triple $(\class C,\class W,\class F)$ and let $Q$ be a left rooted quiver.
Then if $\class W$ is closed under (small) coproducts, 
there exists an induced hereditary Hovey triple $(\Phi(\class C),\class T,\rep_{Q}\class F)$ on the category of representations $\rep_{Q}\class M$, where $\class T$ is defined as in \ref{fact-gil-how-to}. In particular, for all $i\in Q_{0}$ and $X\in\class T$ we have $X(i)\in\class W$.
\end{proposition}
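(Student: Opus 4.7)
The plan is to recognize the two complete hereditary cotorsion pairs in (1) — obtained from the Hovey triple $(\class C,\class W,\class F)$ via Facts \ref{fact1} and \ref{Odabasi} — as precisely the input required by Fact \ref{fact-gil-how-to}. Accordingly, the work reduces to verifying the two compatibility conditions (i) and (ii) of that fact, and then unpacking the ``in particular'' statement.

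Condition (i), $\widetilde{\class R}\subseteq\class R$ and $\widetilde{\class Q}\subseteq\class Q$, is formal: the first reduces to $\class F\cap\class W\subseteq\class F$, and the second follows because requiring $X(i)$ and $\coker\phi_i^X$ to lie in the smaller class $\class C\cap\class W$ forces them into $\class C$. The first half of condition (ii), namely $\widetilde{\class Q}\cap\class R\subseteq\widetilde{\class R}\cap\class Q$, is equally immediate: an $X\in\Phi(\class C\cap\class W)\cap\rep_Q\class F$ has $X(i)\in\class C\cap\class W\cap\class F$ and $\coker\phi_i^X\in\class C\cap\class W$, which places it in $\rep_Q(\class F\cap\class W)\cap\Phi(\class C)$.

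The main step, and the only point at which the coproduct hypothesis enters, is the reverse inclusion $\widetilde{\class R}\cap\class Q\subseteq\widetilde{\class Q}\cap\class R$. Given $X$ on the left-hand side and $i\in Q_0$, I would consider the defining short exact sequence
\[
0\to\bigoplus_{\alpha\colon j\to i}X(j)\xrightarrow{\phi_i^X}X(i)\to\coker\phi_i^X\to 0.
\]
Here $X(i)\in\class F\cap\class W$, and closure of $\class W$ under small coproducts places the left-hand term in $\class W$ as well; thickness of $\class W$ then forces $\coker\phi_i^X\in\class W$. Combined with $\coker\phi_i^X\in\class C$ (built into $\class Q=\Phi(\class C)$), this upgrades $X$ into $\Phi(\class C\cap\class W)\cap\rep_Q\class F$, as required. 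This thickness-plus-coproduct argument is the only non-formal input of the proof.

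With (i) and (ii) verified, Fact \ref{fact-gil-how-to} produces a unique thick class $\class T$ making $(\Phi(\class C),\class T,\rep_Q\class F)$ a Hovey triple, hereditariness being inherited from $\class M$ by the hereditariness clause of Fact \ref{fact1}. For the final assertion, any $X\in\class T$ fits, by \ref{fact-gil-how-to}, into a short exact sequence $X\rightarrowtail R\twoheadrightarrow Q$ with $R\in\rep_Q(\class F\cap\class W)$ and $Q\in\Phi(\class C\cap\class W)$. Evaluation at $i\in Q_0$ is exact and yields a short exact sequence in $\class M$ whose outer terms both lie in $\class W$, so one more appeal to thickness gives $X(i)\in\class W$.
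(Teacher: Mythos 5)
Your proof is correct and follows essentially the same route as the paper: the paper also applies Gillespie's criterion (Fact \ref{fact-gil-how-to}) to the two cotorsion pairs in (1), notes that the only nontrivial point is the inclusion $\widetilde{\class R}\cap\class Q\subseteq\widetilde{\class Q}\cap\class R$, and settles it via the short exact sequence for $\phi_i^X$ together with thickness and coproduct-closure of $\class W$. Your explicit verifications of condition (i), of the easy half of (ii), and of the ``in particular'' assertion merely spell out what the paper leaves as immediate, so there is no substantive difference.
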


Using duals of the above arguments we easily obtain the following:

\begin{proposition}
\label{model-psi}
Let $\class M$ be an abelian category as in setup \ref{setup} with a hereditary Hovey triple $(\class C,\class W,\class F)$ and let $Q$ be a right rooted quiver.
Then if $\class W$ is closed under (small) products, 
there exists an induced hereditary Hovey triple $(\rep_{Q}\class C,\class T,\Psi(\class F))$ on the category of representations $\rep_{Q}\class M$, where $\class T$ is defined as in \ref{fact-gil-how-to}. In particular, for all $i\in Q_{0}$ and $X\in\class T$ we have $X(i)\in\class W$.
\end{proposition}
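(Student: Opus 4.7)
The plan is to dualize the argument for Proposition \ref{model-phi} in a completely symmetric fashion, replacing coproducts by products, cokernels by kernels, and $\Phi$ by $\Psi$. Starting from the given hereditary Hovey triple on $\class M$, the two associated hereditary complete cotorsion pairs $(\class C\cap\class W,\class F)$ and $(\class C,\class W\cap\class F)$ in $\class M$ produce, via \ref{fact2} and \ref{Odabasi}, two hereditary complete cotorsion pairs
\[(\widetilde{\class Q},\class R)=\bigl(\rep_{Q}(\class C\cap\class W),\Psi(\class F)\bigr)\quad\text{and}\quad (\class Q,\widetilde{\class R})=\bigl(\rep_{Q}\class C,\Psi(\class F\cap\class W)\bigr)\]
in $\rep_{Q}\class M$. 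I would then apply \ref{fact-gil-how-to} to these two pairs.

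Condition (i) of \ref{fact-gil-how-to} is immediate from the monotonicity of $\rep_{Q}(-)$ and $\Psi(-)$ together with the containments $\class C\cap\class W\subseteq\class C$ and $\class F\cap\class W\subseteq\class F$. Condition (ii) reduces, as in the proof of Proposition \ref{model-phi}, to the nonformal inclusion $\widetilde{\class Q}\cap\class R\subseteq\widetilde{\class R}\cap\class Q$. Given $X\in\rep_{Q}(\class C\cap\class W)\cap\Psi(\class F)$, at each vertex $i$ we have $X(i)\in\class C\cap\class W\cap\class F$, the map $\psi_{i}^{X}$ is epic, and $\ker\psi_{i}^{X}\in\class F$; to upgrade this to the membership $X\in\widetilde{\class R}\cap\class Q=\Psi(\class F\cap\class W)\cap\rep_{Q}\class C$ it remains only to show that $\ker\psi_{i}^{X}\in\class W$.

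This is the step where the product-closure hypothesis enters, and it is the main technical point. The short exact sequence
\[0\rightarrow\ker\psi_{i}^{X}\rightarrow X(i)\xrightarrow{\psi_{i}^{X}}\prod_{\alpha\colon i\rightarrow j}X(j)\rightarrow 0\]
has middle term $X(i)$ in $\class W$, and the product on the right lies in $\class W$ by the assumption that $\class W$ is closed under small products (together with $X(j)\in\class W$ for every $j$). Thickness of $\class W$ then forces $\ker\psi_{i}^{X}\in\class W$, which combined with $\ker\psi_{i}^{X}\in\class F$ gives the required membership. An application of \ref{fact-gil-how-to} now yields a unique thick class $\class T$ such that $(\rep_{Q}\class C,\class T,\Psi(\class F))$ is a hereditary Hovey triple on $\rep_{Q}\class M$.

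For the pointwise statement, I would use the explicit description of $\class T$ supplied by \ref{fact-gil-how-to}: any $X\in\class T$ fits in a short exact sequence $0\to X\to R\to Q\to 0$ with $R\in\Psi(\class F\cap\class W)$ and $Q\in\rep_{Q}(\class C\cap\class W)$. Since short exact sequences in $\rep_{Q}\class M$ are evaluated pointwise, fixing $i\in Q_{0}$ produces a short exact sequence $0\to X(i)\to R(i)\to Q(i)\to 0$ in $\class M$ with $R(i)\in\class F\cap\class W$ and $Q(i)\in\class C\cap\class W$, so thickness of $\class W$ delivers $X(i)\in\class W$.
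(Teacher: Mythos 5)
Your proof is correct and follows exactly the route the paper intends. The paper's proof of Proposition~\ref{model-psi} is simply the remark ``Using duals of the above arguments we easily obtain the following,'' and your argument is precisely that dualization: you correctly identify the two cotorsion pairs coming from \ref{fact2} and \ref{Odabasi}, correctly isolate the nontrivial inclusion $\widetilde{\class Q}\cap\class R\subseteq\widetilde{\class R}\cap\class Q$ (note the direction flips relative to the $\Phi$-case because $\Psi$ sits on the right of the cotorsion pair), use product-closure and thickness of $\class W$ to get $\ker\psi_i^X\in\class W$, and read the pointwise statement off the explicit description of $\class T$ in \ref{fact-gil-how-to} via pointwise exactness.
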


In the model structures \ref{model-phi} and \ref{model-psi}, the class of trivial objects $\class T$ is contained in the class of ``vertexwise trivial'' representations, $\rep_{Q}\class W$. For computational purposes we are interested in knowing when these two classes coincide. For this we will restrict to more special types of model structures (although still abundant).

A priori one needs two suitable complete cotorsion pairs in an abelian category $\class M$ in order to define an abelian model structure on $\class M$ (as we recalled in \ref{h1}/\ref{h2}). However, it is possible to obtain quite naturally a model structure starting with only one cotorsion pair. We recall the following from \cite{gillespie-recollement}.

\begin{definition} \cite[Dfn.~3.4]{gillespie-recollement}
Let $\class M$ be an abelian category with enough projectives. A complete cotorsion pair $(\class C,\class W)$ in $\class M$ is called \textit{projective} if $\class C\cap\class W=\Proj\class M$ and $\class W$ is thick. In this case $(\class C,\class W,\class M)$ is a Hovey triple on $\class M$. Dually, if $\class M$ has enough injectives, a complete cotorsion pair $(\class W,\class F)$ in $\class M$ is called \textit{injective} if $\class W\cap\class F=\Inj\class M$ and $\class W$ is thick. In this case $(\class M,\class W,\class F)$ is a Hovey triple on $\class M$. 

In this paper, we call an abelian model structure projective (resp. injective) if its associated Hovey triple is induced by a projective (resp. injective) cotorsion pair.
\end{definition}
 
The following two results provide us with a large class of projective (resp. injective) model structures for categories of quiver representations.

\begin{theorem}
\label{Thm-proj-model}
Let $\class M$ be an abelian category as in setup \ref{setup} and let $Q$ be a left rooted quiver. Let $(\class C,\class W)$ be a hereditary projective cotorsion pair in $\class M$ with $\class W$ closed under (small) coproducts. Then there exists a \textbf{projective model structure} on the category of representations $\rep_{Q}\class M$ with hereditary Hovey triple \[(\Phi(\class C),\rep_{Q}\class W,\rep_{Q}\class M).\]
\end{theorem}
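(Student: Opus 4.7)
The plan is to build the claimed model structure by reducing to Proposition \ref{model-phi} applied to a suitable Hovey triple on $\class M$, and then identifying the thick class produced there with $\rep_{Q}\class W$ via cotorsion-pair duality. The main work consists in this identification; once it is done, the projectivity of the model structure follows formally.

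First, I would observe that a hereditary projective cotorsion pair $(\class C,\class W)$ on $\class M$ gives rise to a hereditary Hovey triple $(\class C,\class W,\class M)$ on $\class M$: its two associated cotorsion pairs are $(\class C\cap\class W,\class M)=(\Proj\class M,\class M)$, which is trivially hereditary, and $(\class C,\class W)$, hereditary by hypothesis; thickness of $\class W$ is part of the definition of a projective cotorsion pair. Since $\class W$ is closed under coproducts by assumption, Proposition \ref{model-phi} applies and yields a hereditary Hovey triple $(\Phi(\class C),\class T,\rep_{Q}\class M)$ on $\rep_{Q}\class M$, with $\class T$ described as in \ref{fact-gil-how-to}.

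The key step is to show $\class T=\rep_{Q}\class W$. For this I would use cotorsion-pair duality on both sides: in the Hovey triple $(\Phi(\class C),\class T,\rep_{Q}\class M)$ the ``cofibrant vs.\ trivially fibrant'' cotorsion pair is $(\Phi(\class C),\class T)$, so $\class T=\rightperp{\Phi(\class C)}$ inside $\rep_{Q}\class M$. On the other hand, applying \ref{fact1} directly to the cotorsion pair $(\class C,\class W)$ in $\class M$ produces the cotorsion pair $(\Phi(\class C),\rep_{Q}\class W)$ in $\rep_{Q}\class M$, so also $\rep_{Q}\class W=\rightperp{\Phi(\class C)}$. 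Comparing the two identifications gives $\class T=\rep_{Q}\class W$.

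Finally, to confirm that the resulting hereditary Hovey triple $(\Phi(\class C),\rep_{Q}\class W,\rep_{Q}\class M)$ is projective in the sense of Definition~3.6, I would read off from the Hovey triple that $\Phi(\class C)\cap\rep_{Q}\class W=\leftperp{(\rep_{Q}\class M)}=\Proj(\rep_{Q}\class M)$, and note that thickness of $\rep_{Q}\class W$ is inherited vertexwise from the thickness of $\class W$. The only step that requires any thought is the identification $\class T=\rep_{Q}\class W$; by routing it through the right orthogonals of $\Phi(\class C)$ one avoids any direct combinatorial construction of the short exact sequences in \ref{fact-gil-how-to}, which would otherwise be the main obstacle.
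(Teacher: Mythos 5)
Your argument follows the paper's own proof almost exactly: both apply Proposition~\ref{model-phi} to get the Hovey triple $(\Phi(\class C),\class T,\rep_{Q}\class M)$ and then identify $\class T=\rep_{Q}\class W$ by observing that each is the right $\Ext$-orthogonal of $\Phi(\class C)$ coming from a cotorsion pair (uniqueness of the right half of a cotorsion pair). The only cosmetic difference is in verifying projectivity: you read off $\Phi(\class C)\cap\rep_{Q}\class W=\leftperp{(\rep_{Q}\class M)}=\Proj(\rep_{Q}\class M)$ abstractly from the Hovey triple, whereas the paper computes $\Phi(\class C)\cap\rep_{Q}\class W=\Phi(\class C\cap\class W)=\Phi(\Proj\class M)=\Proj(\rep_{Q}\class M)$ using Fact~\ref{fact1}; both are valid.
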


\begin{proof} 
By assumption, $(\class C,\class W)$ is a hereditary cotorsion pair, thus from \ref{model-phi} we know that $(\Phi(\class C),\class T,\rep_Q\class M)$ is a hereditary Hovey triple on $\class M$. In particular $(\Phi(\class C),\class T)$ is a cotorsion pair in $\class M$. Moreover, from \ref{fact1} we have that $(\Phi(\class C),\rep_{Q}\class W)$ is a cotorsion pair in $\class M$. Hence we have $\rep_{Q}\class W=\class T$. The cotorsion pair obtained is projective since \[\Phi(\class C)\cap\class W=\Phi(\class C\cap\class W)=\Phi(\Proj\class M)=\Proj(\rep_{Q}\class M),\]
where the last equality is given by \ref{fact1}.
\end{proof}

Dually, we have the following:

\begin{theorem}
\label{Thm-inj-model}
Let $\class M$ be an abelian category as in setup \ref{setup} and let $Q$ be a right rooted quiver. Let $(\class W,\class F)$ be an hereditary injective cotorsion pair in $\class M$ with $\class W$ closed under (small) products. Then there exists an \textbf{injective model structure} on the category of representations $\rep_{Q}\class M$ with hereditary Hovey triple \[(\rep_{Q}\class M,\rep_{Q}\class W,\Psi(\class F)).\]
\end{theorem}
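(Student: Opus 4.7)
The plan is to dualize the argument given for Theorem \ref{Thm-proj-model}. Since $(\class W,\class F)$ is an injective cotorsion pair, it corresponds on $\class M$ to the hereditary Hovey triple $(\class M,\class W,\class F)$. With $Q$ right rooted and $\class W$ closed under small products, Proposition \ref{model-psi} applies and produces a hereditary Hovey triple $(\rep_{Q}\class M,\class T,\Psi(\class F))$ on $\rep_{Q}\class M$, where $\class T$ is described as in Fact \ref{fact-gil-how-to}. What remains is to identify $\class T$ with $\rep_{Q}\class W$ and to verify that the resulting cotorsion pair is injective.

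For the first task, one of the two complete cotorsion pairs associated to the above Hovey triple is $(\class T,\Psi(\class F))$. On the other hand, Fact \ref{fact2} applied directly to $(\class W,\class F)$ produces the complete cotorsion pair $(\rep_{Q}\class W,\Psi(\class F))$ in $\rep_{Q}\class M$. Since a cotorsion pair is determined by either of its two classes, the equality of the right classes forces $\class T=\rep_{Q}\class W$.

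For the second task, I want to show $\rep_{Q}\class W\cap\Psi(\class F)=\Inj(\rep_{Q}\class M)$. Applying Fact \ref{fact2} to the trivial cotorsion pair $(\class M,\Inj\class M)$ in $\class M$ yields the cotorsion pair $(\rep_{Q}\class M,\Psi(\Inj\class M))$, which identifies $\Psi(\Inj\class M)=\Inj(\rep_{Q}\class M)$. It therefore suffices to show $\rep_{Q}\class W\cap\Psi(\class F)=\Psi(\class W\cap\class F)$. The inclusion $\supseteq$ is immediate. For $\subseteq$, given $X$ in the left-hand side, each $X(i)$ lies in $\class W\cap\class F=\Inj\class M$; from the short exact sequence $0\rightarrow\ker\psi_{i}^{X}\rightarrow X(i)\rightarrow\prod_{\alpha\uc i\rightarrow j}X(j)\rightarrow 0$, together with the thickness of $\class W$ and its closure under products, one gets $\ker\psi_{i}^{X}\in\class W$; combining with the hypothesis $\ker\psi_{i}^{X}\in\class F$ yields $\ker\psi_{i}^{X}\in\class W\cap\class F$, as desired.

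I do not anticipate a serious obstacle: the whole argument is a mechanical dualization of Theorem \ref{Thm-proj-model}, and the two hypotheses (right rooted quiver and closure of $\class W$ under products) are exactly what Proposition \ref{model-psi} requires and what is needed to run the thickness argument on $\ker\psi_{i}^{X}$ in place of the cokernel argument on $\phi_{i}^{X}$.
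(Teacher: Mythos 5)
Your proof is correct and follows essentially the same route as the paper's: the paper's own proof simply says to dualize Theorem \ref{Thm-proj-model} via Proposition \ref{model-psi}, identify $\class T=\rep_{Q}\class W$ by comparing cotorsion pairs with a common class, and check injectivity, which is exactly what you carry out (spelling out the identification $\rep_{Q}\class W\cap\Psi(\class F)=\Psi(\class W\cap\class F)=\Psi(\Inj\class M)=\Inj(\rep_{Q}\class M)$ via the thickness and product-closure of $\class W$).
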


\begin{proof}
The proof of this follows the same lines as the proof of \ref{Thm-proj-model}, where one instead makes use of Proposition \ref{model-psi} in order to obtain a hereditary Hovey triple $(\rep_{Q}\class M,\class T,\Psi(\class F))$, and then argues that $\class T=\rep_{Q}\class W$ and that the cotorsion pair $(\class W,\Psi(\class F))$ is injective.
\end{proof}

\begin{remark}
We should explain how Theorems \ref{Thm-proj-model} and \ref{Thm-inj-model} connect with some classical results from the theory of model categories. Given a small category $Q$ and a cofibrantly generated model category $\class M$, it is well known that there exists an induced cofibrantly generated model structure on the functor category $\class M^{Q}$, see for instance \cite[Thm.~11.6.1]{MR1944041}. 
In the language of this paper, \cite[Thm.~11.6.1]{MR1944041} when restricted to abelian model structures says the following: 

\textit{Let $Q$ be a small category and let $\class M$ be an abelian model category with Hovey triple $(\class C,\class W,\class F)$, where the associated cotorsion pairs $(\class C,\class W\cap\class F)$ and $(\class C\cap\class W,\class F)$ are each  generated by a set (and so $\class M$ is cofibrantly generated by \cite[Lemma~6.7]{hovey}). Say $(\class C,\class W\cap\class F)=(\leftperp{(\rightperp{\class S})},\rightperp{\class S})$ for a set $\class S\subseteq \class C$. Then there exists an abelian model structure on the functor category $\class M^{Q}:=\rep_{Q}\class M$ with Hovey triple 
\[(\Sigma,\class \rep_{Q}\class W,\rep_{Q}\class F),\]
where the cotorsion pair $(\Sigma,\class \rep_{Q}(\class W \cap \class F))$ is generated by the set \[f_{*}(\class S):=\{f_{i}S\, |\, i\in Q_{0}\, , S\in\class S\}.\]}

We point out that this theorem agrees, for certain cotorsion pairs, with results of Holm and J\o rgensen, cf.~\cite[Thm.~7.4(a)]{Holm-Jorgensen2016}. Moreover, note that the class $\Sigma$ is the left hand side of a complete cotorsion pair which is generated by the set $f_{*}(\class S)$. Hence it consists of summands of transfinite extensions of objects in $f_{*}(\class S)$, \cite[3.2]{GobelTrlifaj}. For this reason it is not very computable in general. Note that \ref{Thm-proj-model} identifies this class with $\Phi(\class C)$ in case the given model on $\class M$ is projective and $Q$ is a left rooted quiver.
\end{remark}

Next, we provide some examples modelling stable categories of Gorenstein projective and Gorenstein injective representations of quivers. For a definition of these classes, we refer for instance to \cite{HHl04a}. If $R$ is a ring, $\GProj(R)$ (resp. $\GInj(R)$) denotes the class of Gorenstein projective (resp. Gorenstein injective) right $R$-modules.

\begin{example}
\label{ex-Gproj}
Let $R$ be a Noetherian commutative ring with a dualizing complex or a left-coherent and right-noetherian $k$-algebra (with $k$ a field) admitting a dualizing complex (in the sense of \cite[Setup 1.4']{PJr07}). Then from \cite[Thm.~1.10]{PJr07} the pair $(\GProj(R),\GProj(R)^{\bot})$ is hereditary projective cotorsion pair\footnote{It is also proved in \cite{MR3690524} that the pair $(\GProj(R),\GProj(R)^{\bot})$ is hereditary projective cotorsion pair over right coherent and left n-perfect rings.}. Let $Q$ be a left rooted quiver. Assuming that $Q$ is such that for all $i\in Q_{0}$ the set $\{s(\alpha)\,|\, \alpha\in Q_{1}\,\, \mbox{with}\,\,\, t(\alpha)=i\}$ is finite or assuming that $\GProj(R)^{\bot}$ is closed under coproducts\footnote{This holds for example if $R$ is Iwanaga-Gorenstein, since in this case $\GProj(R)^{\bot}$ is the class of modules of finite projective dimension \cite[Thm.~2.20]{HHl04a}.} from Theorem \ref{Thm-proj-model} we obtain a hereditary Hovey triple \[(\Phi(\GProj(R)),\rep_{Q}(\GProj(R)^{\bot}),\rep_{Q}(R)),\]
on the category of quiver representations of right $R$-modules, $\rep_{Q}(R)$.

From \cite[Thm~3.5.1]{EHS-total-acyclicity-quivers} we have that $\Phi(\GProj(R))=\GProj(\rep_{Q}(R))$, thus the above Hovey triple is
\[(\GProj(\rep_{Q}(R),\rep_{Q}(\GProj(R)^{\bot}),\rep_{Q}(R)).\]
The homotopy category of this model category is \[\Ho(\rep_{Q}(R))
\cong\sGProj(\rep_{Q}(R)),\]
the \textit{stable category of Gorenstein projective representations}.
\end{example}

\begin{example}
\label{ex-GInj}
Let $R$ be a right Noetherian ring. Then the pair $(^{\bot}\GInj(R),\GInj(R))$ is a hereditary injective cotorsion pair \cite[7.3]{HKr05}\footnote{In the recent work \cite{Jans-GInj} the authors prove that $(^{\bot}\GInj(R),\GInj(R))$ is a hereditary injective cotorsion pair over any ring.}. Assuming that $Q$ is a quiver such that for all $i\in Q_{0}$ the set $\{t(\alpha)\,|\, \alpha\in Q_{1}\,\, \mbox{with}\,\,\, s(\alpha)=i\}$ is finite or assuming that $^{\bot}\GInj(R)$ is closed under products\footnote{Again, this holds if $R$ is Iwanaga-Gorenstein, since in this case $^{\bot}\GInj(R)$ is the class of modules of finite injective dimension \cite[Thm.~2.22]{HHl04a}.}, then by Theorem \ref{Thm-inj-model} we obtain a hereditary Hovey triple \[(\rep_{Q}(R),\rep_{Q}(^{\bot}\GInj(R)),\Psi(\GInj(R))),\]
on the category of quiver representations of right $R$-modules, $\rep_{Q}(R)$.

From \cite[Thm~3.5.1]{EHS-total-acyclicity-quivers} we have that $\Psi(\GInj(R))=\GInj(\rep_{Q}(R))$, thus the above Hovey triple is
\[(\rep_{Q}(R),\rep_{Q}(^{\bot}\GInj(R)),\GInj(\rep_{Q}(R))).\]
The homotopy category of this model category is \[\Ho(\rep_{Q}(R))\cong\sGInj(\rep_{Q}(R)),\]
the \textit{stable category of Gorenstein injective representations}. 
\end{example}

\section{Quiver representations over Ding-Chen rings}
The examples \ref{ex-Gproj} and \ref{ex-GInj} admit generalizations which are worth mentioning. Gillespie in \cite{MR2607410} based on work of Ding and Chen \cite{MR1396867} defines Ding-Chen rings as a generalization of Gorenstein rings. 
A ring is called Ding-Chen if it is left and right coherent with $\FPI-\mathrm{dim}_{R}R$ and $\FPI-\mathrm{dim}R_{R}$ both finite\footnote{Here $\FPI-\mathrm{dim}$ denotes the fp-injective dimension. We recall that an $R$-module $M$ is called fp-injective if for any finitely presented module $F$ we have $\Ext_{R}^{1}(F,M)=0,$  see \cite{Stenstrom-FPI}. These modules define a (relative) homological dimension, see \cite[Ch.~8]{rha}.}. In this case from \cite{MR1202159} we necessarily have $\FPI-\mathrm{dim}_{R}R=n=\FPI-\mathrm{dim}R_{R}$ for some $n\in\mathbb{N}$. Note that if $R$ is two-sided Noetherian then this definition recovers the Iwanaga-Gorenstein rings.

Gillespie studies in \cite{MR2607410} Ding projective, injective and flat modules which stand for generalizations of Gorenstein projective, injective and flat modules respectively.

\begin{remark}
The definition of Ding projective and Ding injective modules over a ring involves the concepts of flat and fp-injective modules respectively \cite[Dfn.~3.2/3.7]{MR2607410}. Since we are interested in Ding projective and  Ding injective representations of quivers, we need to make sense of flatness and fp-injectivity in a more general context than module categories. The appropriate setup to define such notions is that of a locally finitely presented additive (usually Grothendieck) category, see \cite{AdamekRosicky, WCB94}. In this context an object $M$ is called flat if any epimorphism with target $M$ is pure, and dually, $M$ is called fp-injective if any monomorphism with source $M$ is pure. For a locally finitely presented Grothendieck category $\class A$ and a quiver $Q$, the category of quiver representations $\rep_{Q}\class A$ is again locally finitely presented Grothendieck \cite[Cor.~1.54]{AdamekRosicky}.
\end{remark}

\begin{definition}
Let $\class A$ be a locally finitely presented Grothendieck category. An object $M$ in $\class A$ is called \textit{Ding projective} if there exists an exact complex of projective objects in $\class A$ which has $M$ as a syzygy and remains exact after applying functors of the form $\Hom_{\class A}(-,F)$, for $F$ a flat object in $\class A$. We denote the class of Ding projective objects in $\class A$ by $\DProj(\class A)$.
\end{definition}

\begin{definition}
Let $\class A$ be a locally finitely presented Grothendieck category. An object $M$ in $\class A$ is called \textit{Ding injective} if there exists an exact complex of injectives in $\class A$ which has $M$ as a syzygy and remains exact after applying functors of the form $\Hom_{\class A}(F,-)$, for $F$ an fp-injective object in $\class A$. We denote the class of Ding injective objects in $\class A$ by $\DInj(\class A)$.
\end{definition}

We will make use of the following facts which concern flat and fp-injective representations, i.e. flat and fp-injective objects in the category of quiver representations of right $R$-modules, $\rep_{Q}(R)$.

\begin{fact}
\label{korean}
Let $R$ be a ring, $Q$ a quiver and let $X\in\rep_{Q}(R)$. Then we have:
\begin{itemize}
\item[(i)] \cite[3.4/3.7]{MR2100360} If $X$ is a flat representation, then for each vertex $v\in Q_{0}$, the natural map $\phi_{v}^{X}$ as in \ref{quiver} is a pure monomorphism and $X(v)$ is flat. The converse holds in case $Q$ is left rooted.
\item[(ii)] \cite[4.4/4.10]{MR3275710} If $X$ is an fp-injective representation, then for each vertex $v\in Q_{0}$, the natural map $\psi_{v}^{X}$ as in \ref{quiver} is a pure epimorphism and $X(v)$ is fp-injective. The converse holds in case $Q$ is right rooted and $R$ is right coherent.
\end{itemize}
\end{fact}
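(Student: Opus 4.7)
The plan is to combine the adjunction $(f_v \dashv (-)(v) \dashv g_v)$ recalled in \ref{adjoints} with the purity characterizations of flatness and fp-injectivity: in a locally finitely presented Grothendieck category an object $M$ is flat iff every short exact sequence ending at $M$ is pure-exact, and $M$ is fp-injective iff every short exact sequence starting at $M$ is pure-exact.

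For the forward direction of (i), I would first observe that $f_v$ is exact and preserves finitely presented objects, because its right adjoint $(-)(v)$ is exact and preserves filtered colimits. Consequently $f_v$ sends pure-exact sequences in $R$-Mod to pure-exact sequences in $\rep_Q(R)$, and, dually, any pure-exact sequence in $\rep_Q(R)$ remains pure after evaluating at $v$, since purity is tested by $\Hom_R(F,-)$ for $F$ finitely presented and the adjunction $\Hom_R(F,X(v)) \cong \Hom_{\rep_Q(R)}(f_v F, X)$ converts the lifting problem into a lifting problem for the finitely presented object $f_v F$ against $X$. Assuming $X$ is flat, applying this to a short exact sequence $0\to A\to B\to X(v)\to 0$ yields that $X(v)$ is flat. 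For the statement about $\phi_v^X$, I would analyse the short exact sequence $0\to \bigoplus_{\alpha:j\to v}X(j)\to X(v)\to \coker\phi_v^X\to 0$: using the explicit formula for $f_v$ at the vertex $v$, the map $\phi_v^X$ can be identified with a building block of a pure filtration of $X$, hence is itself pure.

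For the converse, use that $Q$ left rooted means $Q_0 = \bigcup_\alpha W_\alpha$, and argue by transfinite induction on $\alpha$ that the restriction of $X$ to $W_\alpha$ is flat in $\rep_{W_\alpha}(R)$. At a successor stage the vertices in $W_{\alpha+1}\setminus W_\alpha$ only receive arrows from $W_\alpha$; the assumption that $\phi_v^X$ is a pure monomorphism with flat vertex values and flat cokernel, together with closure of flat objects under pure extensions, extends the inductive hypothesis. Limit stages are handled by the fact that flat objects are closed under filtered colimits.

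For (ii) the argument is formally dual: replace $f_v$ by $g_v$, work with pure monomorphisms into fp-injectives, and test purity against finitely presented objects mapping into the relevant sequence. The extra hypothesis that $R$ be right coherent enters precisely because the class of fp-injective modules is closed under filtered colimits (and behaves well under products) exactly in the right coherent setting; without this, neither the purity calculus nor the transfinite induction along a right rooted $Q$ goes through. I expect this coherence-dependent step, and more generally the fp-injective analogue of the pure filtration argument from (i), to be the main technical obstacle.
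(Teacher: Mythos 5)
The paper does not actually prove this statement: it is recorded as a \emph{Fact} and imported directly from \cite{MR2100360} and \cite{MR3275710}, so there is no in-paper proof to compare against. Your attempt therefore has to be judged on its own merits.

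The general shape of your sketch --- exploit the adjoints $f_v$ and $g_v$ of the evaluation functor, test purity against finitely presented objects via those adjoints, and run a transfinite induction along the filtration $(W_\alpha)$ for the converse --- is the right one and is in the spirit of the cited sources. But two steps are not yet arguments. First, the purity of $\phi_v^X$ for a flat $X$ is asserted by saying it is ``a building block of a pure filtration of $X$''; what one actually shows is that $\phi_v^P$ is a \emph{split} monomorphism whenever $P$ is a projective representation (from the explicit formula for $f_i$: the domain of $\phi_v^{f_i(M)}$ is the direct summand of $f_i(M)(v)$ indexed by paths of length at least one), and one then writes a flat representation as a filtered colimit of projective ones, so that $\phi_v^X$ is a filtered colimit of split monomorphisms and hence pure. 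Second, your explanation of why right coherence is needed in (ii) is inaccurate: products of fp-injective modules are fp-injective over any ring (this is an AB4* computation of $\Ext^1_R(F,-)$ against a finitely presented $F$), so coherence is not what makes products behave; coherence is what makes the class of fp-injectives closed under filtered colimits and, more to the point, what makes the characterization via vanishing of $\Ext^1_R(F,-)$ robust enough for the transfinite argument of \cite{MR3275710} to go through. Finally, the limit-ordinal stage of your induction is left to the reader and is not a bare ``flat objects are closed under filtered colimits'' statement --- one must say how flatness of the restrictions of $X$ to the subquivers $Q|_{W_\beta}$ assembles to flatness on $Q|_{W_\alpha}$, and this is precisely where the real work in the cited sources lies.
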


The proofs of the following two results are based on techniques developed in Eshraghi et al.~\cite{EHS-total-acyclicity-quivers}, although some modifications are needed. We keep the presentation as concise as possible.

\begin{lemma}  \textnormal{(cf.~\cite[3.1.5]{EHS-total-acyclicity-quivers})}
\label{tech}
Let $R$ be a ring, $Q$ a quiver and let $X\in\rep_{Q}R$. Then the following hold:
\begin{itemize}
\item[(i)] Assuming that $Q$ is left rooted, if for all $v\in Q_{0}$, $X(v)$ is flat, then $\mathrm{Flat-dim}(X)\leq 1$.
\item[(ii)] Assuming that $R$ is right coherent and $Q$ is right rooted, if for all $v\in Q_{0}$, $X(v)$ is fp-injective, then $\FPI-\mathrm{dim}(X)\leq 1$.
\end{itemize}
\end{lemma}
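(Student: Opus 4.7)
The two statements are dual, so I describe (i) in detail; part (ii) follows by the mirror argument using the right adjoints $g_v$ from paragraph \ref{adjoints}, the maps $\psi_v$, products in place of coproducts, and Fact \ref{korean}(ii) in place of Fact \ref{korean}(i). The hypotheses ``$R$ right coherent'' and ``$Q$ right rooted'' in (ii) are exactly what Fact \ref{korean}(ii) needs to turn fp-injectivity in $\rep_Q R$ into a vertexwise condition. My strategy for (i) is to exhibit an explicit flat resolution of $X$ of length one, built from the left adjoints $f_v$.

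Set $F_0 := \bigoplus_{v \in Q_0} f_v X(v)$ and let $\pi \colon F_0 \to X$ be the morphism assembled from the counits of the adjunctions $f_v \dashv (-)(v)$. At each vertex $w$, $\pi_w$ is split by the canonical inclusion $X(w) \hookrightarrow (f_w X(w))(w) \subseteq F_0(w)$ indexed by the trivial path at $w$, so $\pi$ is a vertexwise split epimorphism; set $F_1 := \ker \pi$. To see that $F_0$ is flat, observe that each $f_v X(v)$ has flat components (direct sums of the flat module $X(v)$) and that its structure map $\phi^{f_v X(v)}_w$ is the inclusion of the direct summand of paths of length $\geq 1$ inside $(f_v X(v))(w)$, hence a split (in particular pure) monomorphism; both properties pass to the coproduct $F_0$, and the converse direction of Fact \ref{korean}(i) (which uses left-rootedness of $Q$) then yields that $F_0$ is flat. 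For the kernel one identifies
$$F_1 \;\cong\; \bigoplus_{\alpha \in Q_1} f_{t(\alpha)} X(s(\alpha))$$
inside $F_0$ via the difference of the two natural maps: the one into the $s(\alpha)$-summand of $F_0$ given by precomposing paths with $\alpha$, minus the one into the $t(\alpha)$-summand induced by $X(\alpha)\colon X(s(\alpha))\to X(t(\alpha))$. The same componentwise/purity analysis as for $F_0$ applies to $F_1$, so Fact \ref{korean}(i) once more yields that $F_1$ is flat. Thus $0 \to F_1 \to F_0 \to X \to 0$ is a length-one flat resolution and $\mathrm{Flat-dim}(X) \leq 1$.

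The delicate step is verifying that each $\phi_w^{F_1}$ is \emph{pure} (not merely monic) and justifying the explicit description of $F_1$ itself. This is the point at which left-rootedness of $Q$ really enters: the well-founded filtration $W_\alpha$ of $Q_0$ from paragraph \ref{quiver} permits a transfinite induction that unwinds $F_1$ summand by summand. The construction is the direct adaptation of \cite[3.1.5]{EHS-total-acyclicity-quivers} with ``projective'' replaced by ``flat'', and part (ii) is the straightforward dualization, right-coherence of $R$ entering only through Fact \ref{korean}(ii) so that fp-injectivity of the corresponding coresolution can be tested vertexwise.
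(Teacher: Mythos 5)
Your proposal is essentially correct and follows the same structural path as the paper: build the canonical short exact sequence from the Kan-extension functors $f_v$ (dually $g_v$), identify the two outer terms, and invoke the converse direction of Fact~\ref{korean} to conclude they are flat (dually fp-injective). You prove (i) directly and dualize; the paper proves (ii) and dualizes, which is a symmetric choice. The one genuine difference is in how the flatness/fp-injectivity of the \emph{outer} term is established: you compute directly from the explicit identification $F_1 \cong \bigoplus_{\alpha \in Q_1} f_{t(\alpha)} X(s(\alpha))$ and observe that each $\phi^{F_1}_w$ is split monic with flat components, whereas the paper runs a small diagram chase relating $\psi_v^Y$ to $\psi_v^W$ that never needs the explicit form of the outer term, only that it sits in the sequence with kernel $X$. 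Both work; your route is more computational, the paper's slightly more economical once the middle term is handled.

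One point in your commentary is off, though it does not affect the correctness of the argument. You say the ``delicate step'' is verifying purity of $\phi_w^{F_1}$ and justifying the description of $F_1$, and that this is ``the point at which left-rootedness of $Q$ really enters'' via a transfinite induction. In fact the short exact sequence $0 \to \bigoplus_\alpha f_{t(\alpha)}X(s(\alpha)) \to \bigoplus_v f_v X(v) \to X \to 0$ is \cite[3.1(2)]{EHS-total-acyclicity-quivers} and holds for an arbitrary quiver; no rootedness hypothesis is needed there, and once $F_1$ is identified, $\phi_w^{F_1}$ is split (hence pure) monic by exactly the same one-line inspection as for $F_0$, no transfinite unwinding required. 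Left-rootedness enters solely through the converse implication in Fact~\ref{korean}(i), applied to both $F_0$ and $F_1$ --- just as right-rootedness (together with right coherence) enters in part (ii) only through Fact~\ref{korean}(ii). So the step you single out as delicate is not where the hypothesis is used and is no harder than the $F_0$ case.
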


\begin{proof}
(ii)  Keeping the notation as in \ref{adjoints}, from \cite[3.1(1)]{EHS-total-acyclicity-quivers} there exists a short exact sequence 
\[0\rightarrow X\rightarrow \prod\limits_{v\in Q_{0}}g_{v}X(v)\rightarrow \prod\limits_{\alpha\in Q_{1}}g_{s(\alpha)}X(t(\alpha))\rightarrow 0\]
 in the category $\rep_{Q}(R)$. 
For the representation $g_{v}X(v)$ and for all $w\in Q_{0}$, the natural map $\psi_{g_{v}X(v)}^{w}$, is a split epimorphism. Moreover, by assumption, vertexwise the representation $g_{v}X(v)$ consists of fp-injective modules, thus by \ref{korean}(ii) we obtain that $g_{v}X(v)$ is an fp-injective representation. Hence the middle term in the above short exact sequence is an fp-injective representation (since the class of fp-injective representations is closed under products \cite[App.~B]{Stoviceck-on-purity}). To prove that the term on the right hand side is fp-injective, in order to simplify the notation, denote $Y:=\prod_{v\in Q_{0}}g_{v}X(v)$ and $W:=\prod_{\alpha\in Q_{1}}g_{s(\alpha)}X(t(\alpha))$, so the displayed short exact sequence above is $0\rightarrow X\rightarrow Y\rightarrow W\rightarrow 0$.

Consider for each vertex $v\in Q_{0}$ the commutative diagram of $R$-modules

\begin{displaymath}
  \xymatrix@C=3pc{
    Y(v) \ \ar@{->>}[r] \ar[d]_-{\psi_{v}^{Y}} & 
    W(v)  \ar[d]^-{\psi_{v}^{W}} \\
    \prod\limits_{\alpha:v\rightarrow t(\alpha)}Y(t(\alpha)) \ \ar@{->>}[r] & \prod\limits_{\alpha:v\rightarrow t(\alpha)}W(t(\alpha)).
  }
\end{displaymath}
Then observe that the top map is a pure epimorphism (by the assumption that its kernel, which is $X(v)$, is an fp-injective module), hence also the bottom map is a pure epimorphism. Moreover, the map on the left hand side is a split epimorphism, hence $\psi_{v}^{W}$ is a pure epimorphism. Since $W$ vertexwise consists of fp-injectives, in view of \ref{korean}(ii) we obtain the desired result.
 

The proof of (i) is dual where one uses instead a short exact sequence of representations ending in $X$, see \cite[3.1(2)]{EHS-total-acyclicity-quivers}, and makes use of \ref{korean}(i).
\end{proof}

\begin{proposition}\textnormal{(cf.~\cite[3.5.1]{EHS-total-acyclicity-quivers})}
\label{phi-psi-lemma}
Let $R$ be a ring and let $Q$ be a quiver. Then the following hold:
\begin{itemize}
\item[(i)] If $Q$ is left rooted then $\Phi(\DProj(R))=\DProj(\rep_{Q}(R))$.
\item[(ii)] If $R$ is right coherent and $Q$ is right rooted then $\Psi(\DInj(R))=\DInj(\rep_{Q}(R))$.
\end{itemize}
\end{proposition}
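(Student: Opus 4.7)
The plan is to adapt the argument of Eshraghi-Hafezi-Salarian \cite[Thm.~3.5.1]{EHS-total-acyclicity-quivers}, which proves the analogous equalities for Gorenstein projective and Gorenstein injective representations; the new ingredient is to accommodate the larger test class used to define Ding projectives and Ding injectives (flat and fp-injective objects respectively, in place of projectives and injectives), and Lemma~\ref{tech} together with Fact~\ref{korean} is the tool that handles this enlargement. The two parts are formally dual, so I focus on (i).

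For the inclusion $\Phi(\DProj(R))\subseteq \DProj(\rep_Q(R))$: given $X\in \Phi(\DProj(R))$, the hypothesis provides, for every $v\in Q_0$, a totally acyclic complex of projective $R$-modules with $X(v)$ as a syzygy, and another with $\coker \phi_v^X$ as a syzygy. These are spliced along the canonical short exact sequence
\[ 0 \to \bigoplus_{\alpha\in Q_1} f_{t(\alpha)}X(s(\alpha)) \to \bigoplus_{v\in Q_0} f_v X(v) \to X \to 0 \]
(the projective analogue of the exact sequence used in the proof of Lemma~\ref{tech}) via the Horseshoe Lemma and its dual, to assemble an exact complex $P_\bullet$ of projective representations with $X$ as a syzygy. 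To check that $\Hom_{\rep_Q(R)}(P_\bullet, F)$ is exact for an arbitrary flat representation $F$, one uses that $F(v)$ is flat for every $v$ by Fact~\ref{korean}(i); Lemma~\ref{tech}(i) bounds the flat dimension of $F$ in $\rep_Q(R)$ by $1$, so a single dimension-shift reduces the claim to the vanishing of finitely many $\Ext$-groups of the form $\Ext^i_R(X(v), F(v))$ and $\Ext^i_R(\coker\phi_v^X, F(v))$, all of which vanish by the Ding projective hypothesis on $X(v)$ and $\coker\phi_v^X$.

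Conversely, given $X\in\DProj(\rep_Q(R))$ with totally acyclic complex of projective representations $P_\bullet$, recall that over a left rooted quiver the projective representations coincide with $\Phi(\Proj(R))$, so every $\phi_v^{P_n}$ is a split monomorphism with projective vertex values and projective cokernel. Evaluating termwise yields an exact complex $P_\bullet(v)$ of projective $R$-modules with $X(v)$ as a syzygy, and applying the termwise cokernel functor $\coker\phi_v^{(-)}$ yields a similar exact complex realising $\coker\phi_v^X$ as a syzygy; the snake lemma applied to a short exact sequence $0\to X\to P^0\to X'\to 0$ extracted from $P_\bullet$ forces $\phi_v^X$ to be monic. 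It remains to check exactness of $\Hom_R(P_\bullet(v), F_0)$ for any flat $R$-module $F_0$; since $(-)(v)$ is exact with right adjoint $g_v$ which preserves injectives, there is a natural isomorphism $\Ext^*_R(X(v), F_0) \cong \Ext^*_{\rep_Q(R)}(X, g_v F_0)$, and one concludes by approximating $g_v F_0$ by a flat representation via the cotorsion pair obtained from Fact~\ref{fact1}, combined with Lemma~\ref{tech}(i) and the total acyclicity of $P_\bullet$ against flat representations.

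The main obstacle is the second, more technical subclaim in each direction: unlike in the Gorenstein case of \cite{EHS-total-acyclicity-quivers}, where the test representations are vertexwise projective or injective in a transparent way, a flat representation is only vertexwise flat together with a purity condition on its canonical maps, so the evaluation-adjunction interplay is subtler. Lemma~\ref{tech} is designed precisely for this situation, bounding the gap between vertexwise flatness and representation-flatness by a single step of resolution, thereby allowing the Ding hypothesis to carry through.
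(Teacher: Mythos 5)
Your overall strategy (two inclusions, adapting the EHS argument with Lemma~\ref{tech} handling the enlarged test class) is sound, and your converse direction is roughly on the paper's track, but there are two genuine gaps.

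First, you misplace Lemma~\ref{tech}. In the inclusion $\Phi(\DProj(R))\subseteq\DProj(\rep_Q(R))$ you invoke Lemma~\ref{tech}(i) to ``bound the flat dimension of $F$ in $\rep_Q(R)$ by $1$'' for an arbitrary \emph{flat representation} $F$ — but a flat representation has flat dimension $0$, so this is vacuous and does no work. The lemma is actually needed in the other inclusion: there the test object is $g_v F_0$ for a flat module $F_0$ (arising from the adjunction $\Hom_R(P_\bullet(v),F_0)\cong\Hom_{\rep_Q(R)}(P_\bullet,g_v F_0)$), and $g_v F_0$ is only \emph{vertexwise} flat, not a flat representation, so Lemma~\ref{tech}(i) bounds its flat dimension by $1$ and a one-step dimension shift finishes. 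This is exactly the dual of the paper's use of $f_v F$ and Lemma~\ref{tech}(ii) to prove $\DInj(\rep_Q(R))\subseteq\Psi(\DInj(R))$. Your phrase ``approximating $g_v F_0$ by a flat representation via the cotorsion pair obtained from Fact~\ref{fact1}'' points in a different and less useful direction — what you need is a two-term flat resolution of $g_v F_0$, which Lemma~\ref{tech}(i) supplies, not a cotorsion-theoretic cover. (One also has to verify that the vertex modules $\prod_{\alpha:w\to v}F_0$ are flat; your sketch does not address this.)

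Second, the Horseshoe Lemma does not construct the required totally acyclic complex of projective representations. Horseshoe produces a one-sided resolution of the \emph{middle} term of a short exact sequence; it has no direct analogue for assembling a doubly infinite complex from the outer terms of $0\to\bigoplus_\alpha f_{t(\alpha)}X(s(\alpha))\to\bigoplus_v f_v X(v)\to X\to 0$, and in any case the outer terms here are of the form $f_v(\text{Ding projective})$ rather than projective representations, so there is no obvious splicing of pre-chosen resolutions. The paper instead follows EHS~Thm.~3.5.1(a): one builds $E^\bullet=\bigcup_\lambda E^\bullet_\lambda$ by transfinite recursion along the filtration $(W_\lambda)$ of $Q_0$, using the adjoints $f_v/g_v$ at each stage so that every $E^\bullet(v)$ is totally acyclic with respect to the appropriate test class; the final exactness of $\Hom(E^\bullet,F)$ (respectively $\Hom(F,E^\bullet)$) for a flat (resp.\ fp-injective) representation $F$ is then verified not by dimension-shifting on $F$ but by the degreewise split sequence $0\to\bigoplus_\alpha f_{t(\alpha)}E^\bullet(s(\alpha))\to\bigoplus_v f_v E^\bullet(v)\to E^\bullet\to 0$ together with Fact~\ref{korean} and the adjunction isomorphism $\Hom_{\rep_Q(R)}(f_v M,F)\cong\Hom_R(M,F(v))$. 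You should replace the Horseshoe sketch with this recursion.
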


\begin{proof}
(ii) Assume that $D\in\DInj(\rep_{Q}(R))$, which by definition means that there exists an exact complex of injective representations,
\[X^{\bullet} = \cdots\rightarrow X^{-1}\rightarrow X^{0}\rightarrow X^{1}\rightarrow\cdots,\]
which has\, $D=\ker(X^{0}\rightarrow X^{1}):=Z^{0}(X^{\bullet})$\, and remains exact after applying functors of the form $\Hom_{\rep_{Q}(R)}(\FPI,-)$. We need to prove that $D\in\Psi(\DInj(R))$.

We first prove that for each vertex $v\in Q_{0}$ we have $D(v)\in\DInj(R)$. Indeed, the complex of $R$-modules $X^{\bullet}(v)$ is exact, consists of injective modules \cite[2.1]{EEGR-injective-quivers}, has $D(v)$ as a syzygy,  and it remains to check that, for any fp-injective module $F$, the complex $\Hom_{R}(F,X^{\bullet}(v))$ is exact.
Now, the functor $f_{v}M$, as in \ref{adjoints}, from \cite[5.2(a)]{Holm-Jorgensen2016} is such that the complex $\Hom_{R}(F,X^{\bullet}(v))$ is exact if and only if the complex of representations\, $\Hom_{\rep_{Q}(R)}(f_{v}F,X^{\bullet})$ is exact. Note that $f_{v}F$ vertexwise consists of fp-injectives (since $\FPI(R)$ is closed under coproducts \cite{Stenstrom-FPI}), hence Lemma \ref{tech}(ii) implies that $f_{v}F$ is a representation with fp-injective dimension $\leq 1$. This means that there exists a short exact sequence of representations $0\rightarrow f_{v}F\rightarrow Y_{0}\rightarrow Y_{1}\rightarrow 0$\, with $Y_{0}, Y_{1}$\, fp-injectives. Hence we obtain a short exact sequence of complexes of representations
\[0\rightarrow\Hom (Y_{1},X^{\bullet})\rightarrow\Hom(Y_{0},X^{\bullet})\rightarrow\Hom(f_{v}F,X^{\bullet})\rightarrow 0,\]
where by the assumption on $X$ the two leftmost terms are exact. Hence so is $\Hom (f_{v}F,X^{\bullet})$. Thus $D(v)\in\DInj(R)$.

Next, we show that for all $v\in Q_{0}$ the natural map $\psi^{D}_{v}$ of \ref{quiver} is an epimorphism with kernel in $\DInj(R)$. 
For this fix a vertex $v\in Q_{0}$, then in the commutative diagram 
\begin{displaymath}
  \xymatrix@C=3pc{
    X^{0}(v) \ \ar@{->>}[r]^-{\partial^{0}_{v}} \ar[d]_-{\psi_{v}^{X^{0}}} & 
    D(v)  \ar[d]^-{\psi_{v}^{D}} \\
    \prod\limits_{\alpha:v\rightarrow t(\alpha)}X^{0}(t(\alpha)) \ \ar@{->>}[r] & \prod\limits_{\alpha:v\rightarrow t(\alpha)}D(t(\alpha)),  
  }
\end{displaymath}
the map on the left is an epimorphism (by the characterization of injective representations in \cite[2.1]{EEGR-injective-quivers}), hence $\psi_{D}^{v}$ is also an epimorphism. Consider the degreewise split short exact sequence of complexes of $R$-modules
\[0\rightarrow\ker\psi^{v}_{X^{\bullet}}\rightarrow X^{\bullet}(v)\rightarrow\prod\limits_{\alpha:v\rightarrow t(\alpha)}X^{\bullet}(t(\alpha))\rightarrow 0.\]
By a two-out-of-three argument we see that the complex on the left is an exact complex of injectives which stays exact after applying functors of the form $\Hom_{R}(\FPI(R),-)$, and moreover has $\ker(\psi_{D}^{v})$ as a syzygy. Thus $\ker(\psi_{D}^{v})\in\DInj(R)$ which concludes the proof that $D\in\Psi(\DInj(R))$.

Conversely, let $D\in\Psi(\DInj(R))$. We want to prove that $D\in\DInj(\rep_{Q}(R))$, i.e. we want to find an exact complex of injective representations $E^{\bullet}$ which has $D$ as a syzygy and remains exact after applying functors of the form $\Hom(\FPI,-)$. Recall the transfinite sequence $(W_{\lambda})$ as defined in \ref{quiver}. Following the proof of \cite[Thm.~3.5.1(a)]{EHS-total-acyclicity-quivers}, we will see how to construct recursively, for each ordinal $\lambda$, an exact complex $E^{\bullet}_{\lambda}$ of injective representations of the subquiver $Q^{\lambda}:=\{v\in Q_{0}\,\,|\,\,v\in W_{\lambda}\}$, which is such that for all $v\in Q^{\lambda}$, the complex $E^{\bullet}_{\lambda}(v)$\, is\, $\Hom(\FPI(R),-)$--exact and has $D(v)$ as a syzygy. We give the first step of this construction:

 For each vertex $v\in W_{1}:=\{i\in Q_{0}\, |\, i\,\,\mbox{is not the target of any arrow in Q}\}$, by the assumption that $D(v)\in\DInj(R)$, there exists an exact complex of injectives $I_{v}^{\bullet}$ with $Z^{0}(I_{v}^{\bullet})=D(v)$. Thus we may use the right adjoint $g_{v}$ from \ref{adjoints} to obtain an exact complex $E_{1}^{\bullet}:=g_{v}I^{\bullet}_{v}$ of injective representations of the subquiver $Q^{1}:=\{v\in Q_{0}\,\,|\,\,v\in W_{1}\}$. Then note that for all $v\in Q_{0}$, the exact complex of injectives $E_{1}^{\bullet}(v)$ has $D(v)$ as a syzygy and is $\Hom(\FPI(R),-)$--exact.
 
Then one can follow verbatim the rest of the argument in \cite[Thm.~3.5.1(a)]{EHS-total-acyclicity-quivers} to obtain an exact complex $E^{\bullet}=\cup_{\lambda}E^{\bullet}_{\lambda}$ of injective representations which has $D$ as a syzygy, and is such that for all $v\in Q_{0}$ the exact complex of injectives $E^{\bullet}(v)$ is $\Hom_{R}(\FPI(R),-)$--exact. The proof will be finished if we show that for any fp-injective representation $F$, the complex $\Hom(F,E^{\bullet})$ is exact. This follows after considering the degreewise split short exact sequence 
\[0\rightarrow E^{\bullet}\rightarrow \prod\limits_{v\in Q_{0}}g_{v}E^{\bullet}(v)\rightarrow \prod\limits_{\alpha\in Q_{1}}g_{s(\alpha)}E^{\bullet}(t(\alpha))\rightarrow 0\]
and observing that, for any fp-injective representation $F$, the two rightmost terms are $\Hom(F,-)$--exact.\\
The proof of (i) is completely dual, one just needs to make use of the duals of the arguments in the proof of (i), which are provided in our previous results. 
\end{proof}

We now give a projective model structure for Ding projective representations over a Ding-Chen ring. We will need the following result of Ding and Chen.

\begin{fact}\cite[Prop.~3.16]{MR1202159}
\label{finitistic}
Let $R$ be a Ding-Chen ring with $\FPI-\mathrm{dim}_{R}R\leq n$ and $\FPI-\mathrm{dim}R_{R}\leq n$ for some integer $n\in\mathbb{N}$. Then for any right $R$-module $M$ we have bi--implications:
\begin{equation*}
\begin{split}
\mathrm{Flat-dim}(M)<\infty & \Leftrightarrow \mathrm{Flat-dim}(M)\leq n\\
 &\Leftrightarrow  \FPI-\mathrm{dim}(M)<\infty\\
 &\Leftrightarrow  \FPI-\mathrm{dim}(M)\leq n.
\end{split}
\end{equation*}
\end{fact}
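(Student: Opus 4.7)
The plan is to follow the classical strategy used for the analogous Iwanaga--Gorenstein equivalences, with \emph{flat} and \emph{fp-injective} playing the roles of projective and injective. The trivial direction in each bi-implication is obvious, so the substance lies in showing that finite flat (resp.\ fp-injective) dimension automatically forces the dimension to be at most $n$, and that finite flat dimension is equivalent to finite fp-injective dimension. I would establish these in the following order.

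First, I would prove the Ext-vanishing lemma: if $M$ is a right $R$-module with $\FPI$-$\dim(M)<\infty$, then $\Ext^{i}_{R}(F,M)=0$ for every finitely presented right $R$-module $F$ and every $i>n$. Since $R$ is right coherent, every finitely presented $F$ admits a resolution by finitely generated projectives, and using the hypothesis $\FPI$-$\dim(_{R}R)\leq n$ (on the \emph{left} side) together with a tensor-hom dimension-shifting argument, one derives that $\Tor^{R}_{>n}(F,-)$ vanishes on flat modules and that the corresponding $\Ext$ vanishes past degree $n$ against any module with a finite fp-injective coresolution. Applying this to the $n$-th cosyzygy $C$ of an fp-injective coresolution of $M$ of length $k>n$ forces $\Ext^{1}(F,C)=0$ for all finitely presented $F$, hence $C$ is fp-injective and $\FPI$-$\dim(M)\leq n$.

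Second, I would use character-module duality $(-)^{+}=\Hom_{\mathbb{Z}}(-,\mathbb{Q}/\mathbb{Z})$ to transfer the statement between the flat and fp-injective sides. Over a two-sided coherent ring, $M$ is flat iff $M^{+}$ is fp-injective (in fact injective), and $M$ is fp-injective iff $M^{+}$ is flat, with the analogous equalities at the level of dimensions: $\mathrm{Flat\text{-}dim}(M)=\FPI\text{-}\dim(M^{+})$ and $\FPI\text{-}\dim(M)=\mathrm{Flat\text{-}dim}(M^{+})$. Combined with the first step applied on the opposite side (which is legitimate by the two-sided hypothesis $\FPI$-$\dim R_{R}=\FPI$-$\dim {}_{R}R=n$), this yields simultaneously the bound $\mathrm{Flat\text{-}dim}(M)\leq n$ whenever $\mathrm{Flat\text{-}dim}(M)<\infty$, and the equivalence between finiteness of $\mathrm{Flat\text{-}dim}(M)$ and of $\FPI$-$\dim(M)$: finite $\mathrm{Flat\text{-}dim}(M)$ gives finite injective, hence fp-injective, dimension of $M^{+}$, which pulls back to finite $\FPI$-$\dim(M)$ via purity of the canonical embedding $M\hookrightarrow M^{++}$; the converse is symmetric.

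The main obstacle is the Ext-vanishing lemma in the first step, because one must pass between the two sides of the ring (the hypothesis involves $_{R}R$ and $R_{R}$ simultaneously), and the class of fp-injective modules is more delicate than that of injective modules: it is not closed under arbitrary quotients, and its behavior is controlled only under coherence. Making the dimension-shifting precise requires carefully choosing between flat resolutions, fp-injective coresolutions, and character modules so that each step stays within a class where the relevant vanishing is available; this is exactly where the two-sided $\FPI$-self dimension bound of a Ding--Chen ring is used essentially, and where the argument is no longer a formal translation of the Iwanaga--Gorenstein proof.
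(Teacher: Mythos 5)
The paper does not prove this statement at all: it is stated as a \emph{Fact}, cited directly from Ding and Chen \cite[Prop.~3.16]{MR1202159}, and the author relies on that reference. So there is no ``paper's own proof'' to compare against; what follows is an assessment of your proposal on its own terms.

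Your overall strategy --- establish a uniform $\Ext$-vanishing bound for modules of finite fp-injective dimension, and then move between the flat and fp-injective sides via the character module $(-)^{+}=\Hom_{\mathbb{Z}}(-,\mathbb{Q}/\mathbb{Z})$ together with the purity of $M\hookrightarrow M^{++}$ --- is the right skeleton and is in the spirit of the arguments in the Ding--Chen literature. However, there are two points where the write-up is imprecise enough to be a genuine gap.

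First, in Step 1 the sentence ``one derives that $\Tor^{R}_{>n}(F,-)$ vanishes on flat modules'' is vacuous: $\Tor_{>0}(F,-)$ vanishes on flat modules for any $F$ and any ring, so this carries no information and cannot be the engine of the argument. What is actually needed there is the nontrivial claim that every \emph{flat} right $R$-module has $\FPI$-dimension $\le n$, which follows from $\FPI\text{-}\dim R_{R}\le n$ (the \emph{right}-side hypothesis), closure of fp-injectives under coproducts and direct limits, and coherence. More seriously, the $\Ext$-vanishing lemma as you state it --- $\FPI\text{-}\dim(M)<\infty$ forces $\Ext^{i}(F,M)=0$ for $i>n$ and $F$ finitely presented --- cannot be proved from the left-side hypothesis $\FPI\text{-}\dim({}_{R}R)\le n$ alone as written. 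One route is: the left-side hypothesis shows fp-injective right modules have flat dimension $\le n$, whence $\FPI\text{-}\dim(M)<\infty\Rightarrow\mathrm{Flat\text{-}dim}(M)\le n$; then the right-side hypothesis shows flat right modules have $\FPI$-dimension $\le n$, and dimension-shifting along a finite flat resolution of $M$ gives $\Ext^{>n}(F,M)=0$. Both sides of the Ding--Chen hypothesis enter already in Step~1; postponing one of them to Step~2 leaves a circular-looking gap.

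Second, the duality $\mathrm{Flat\text{-}dim}(M)=\FPI\text{-}\dim(M^{+})$ is not the clean identity. The clean statement is $\mathrm{Flat\text{-}dim}(M)=\mathrm{Inj\text{-}dim}(M^{+})$ (true over any ring), and $\FPI\text{-}\dim(M^{+})\le\mathrm{Inj\text{-}dim}(M^{+})$; equality does hold over a two-sided coherent ring, but it requires the round trip $\FPI\text{-}\dim(M^{+})\ge\mathrm{Flat\text{-}dim}(M^{++})\ge\mathrm{Flat\text{-}dim}(M)$ via the purity of $M\hookrightarrow M^{++}$ (which you invoke) and then closing the chain. You should make this round trip explicit rather than asserting the equality, and similarly be careful that ``$M$ fp-injective $\Leftrightarrow$ $M^{+}$ flat'' uses coherence, while ``$M$ flat $\Leftrightarrow$ $M^{+}$ injective'' is unconditional. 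With these two points repaired, the argument closes correctly.
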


\begin{theorem}
\label{ding-proj-model}
Let $R$ be a Ding-Chen ring and let $Q$ be a left rooted quiver. Then there exists a hereditary Hovey triple $(\DProj(\rep_{Q}\class (R)),\rep_{Q}\class W,\rep_{Q}(R))$ on the category of quiver representations of right $R$-modules, $\rep_{Q}(R)$. The homotopy category of this model structure is \[\Ho(\rep_{Q}(R))\cong\sDProj(\rep_{Q}(R)),\]
the \textit{stable category of Ding projective representations}.
\end{theorem}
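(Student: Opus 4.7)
The plan is to apply Theorem \ref{Thm-proj-model} to the ground category of right $R$-modules, equipped with the projective cotorsion pair for Ding projective modules established by Gillespie, and then to invoke Proposition \ref{phi-psi-lemma}(i) to rewrite the cofibrant class in the resulting Hovey triple on $\rep_Q(R)$. The homotopy category will then come essentially for free from \ref{h2}.

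First I would recall from \cite{MR2607410} that over a Ding-Chen ring $R$ the pair $(\DProj(R), \class W)$ is a hereditary projective cotorsion pair on the category of right $R$-modules, where $\class W$ denotes the class of right $R$-modules of finite flat dimension (equivalently, by Fact \ref{finitistic}, of finite fp-injective dimension). In particular $\class W$ is thick and $\DProj(R) \cap \class W = \Proj(R)$. To apply Theorem \ref{Thm-proj-model} I then need to verify that $\class W$ is closed under small coproducts; this is immediate from Fact \ref{finitistic}, which gives a uniform bound $n := \FPI\text{-}\mathrm{dim}\, R_R$ on the flat dimension of every module in $\class W$, combined with the fact that $\Tor$ commutes with coproducts in either argument, so a coproduct of modules of flat dimension $\leq n$ still has flat dimension $\leq n$.

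With both hypotheses of Theorem \ref{Thm-proj-model} verified, we obtain a hereditary Hovey triple $(\Phi(\DProj(R)), \rep_Q \class W, \rep_Q(R))$ on $\rep_Q(R)$, and Proposition \ref{phi-psi-lemma}(i) identifies $\Phi(\DProj(R)) = \DProj(\rep_Q(R))$, yielding the Hovey triple asserted in the statement. For the homotopy category, I would observe that because the fibrant class is the whole category $\rep_Q(R)$, the Frobenius category appearing in \ref{h2} is $\DProj(\rep_Q(R)) \cap \rep_Q(R) = \DProj(\rep_Q(R))$, whose projective--injective objects form $\DProj(\rep_Q(R)) \cap \rep_Q \class W$. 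This intersection equals $\Proj(\rep_Q(R))$ by the projectivity of the induced cotorsion pair (exactly as established in the proof of Theorem \ref{Thm-proj-model}). The canonical equivalence recalled in \ref{h2} then gives $\Ho(\rep_Q(R)) \cong \sDProj(\rep_Q(R))$.

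The only genuinely nontrivial input is Gillespie's projective cotorsion pair on the ground category, and the main technical point in transferring it along $Q$ is the closure of $\class W$ under coproducts, which is disposed of by the uniform finitistic-dimension bound of Fact \ref{finitistic}. Everything else, in particular the identification $\Phi(\DProj(R)) = \DProj(\rep_Q(R))$ which would otherwise be the hard step, has already been carried out in Proposition \ref{phi-psi-lemma}, so I do not anticipate any substantial obstacle beyond importing these ingredients.
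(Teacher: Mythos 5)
Your proposal is correct and follows essentially the same route as the paper: invoke Gillespie's hereditary projective cotorsion pair $(\DProj(R),\class W)$ from \cite{MR2607410}, use Fact \ref{finitistic} to see $\class W$ is closed under coproducts, apply Theorem \ref{Thm-proj-model}, and then identify $\Phi(\DProj(R))=\DProj(\rep_Q(R))$ via Proposition \ref{phi-psi-lemma}(i). Your added elaborations (the $\Tor$-and-coproducts argument and the explicit appeal to \ref{h2} for the homotopy category) are correct but are left implicit in the paper.
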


\begin{proof}
From \cite[Thm.~4.7]{MR2607410} we have that over a Ding-Chen ring $R$ there exists a hereditary complete cotorsion pair $(\DProj(R),\class W)$ in the category of right $R$-modules, where $\class W\cap\DProj(R)=\Proj(R)$. Moreover, from \cite[Thm~4.2]{MR2607410} the class $\class W$ consists of all modules that have finite flat dimension, hence $\class W$ is closed under coproducts by \ref{finitistic}. Hence, for a left rooted quiver $Q$, Theorem \ref{Thm-proj-model} provides us with a hereditary Hovey triple 
\[ (\Phi(\DProj(R)),\rep_{Q}\class W,\rep_{Q}(R)),\]
in the category of quiver representation of right $R$-modules, $\rep_{Q}\class(R)$. Now the result follows after applying \ref{phi-psi-lemma}(i). 
\end{proof}

Our last result concerns Ding injective representations over a Ding-Chen ring. 

\begin{theorem}
\label{dinj-model}
Let $R$ be a Ding-Chen ring and $Q$ a right rooted quiver. Then there exists a hereditary Hovey triple $(\rep_{Q}(R),\rep_{Q}\class W,\class\DInj(\rep_{Q}(R)))$ on the category of quiver representations of right $R$-modules, $\rep_{Q}(R)$. The homotopy category of this model structure is \[\Ho(\rep_{Q}(R))\cong\sDInj(\rep_{Q}(R)),\]
the \textit{stable category of Ding injective representations}. 
\end{theorem}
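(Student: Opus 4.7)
The plan is to run the argument of Theorem \ref{ding-proj-model} in its dual form, substituting Theorem \ref{Thm-inj-model} for Theorem \ref{Thm-proj-model} and Proposition \ref{phi-psi-lemma}(ii) for \ref{phi-psi-lemma}(i). The starting point is the module-theoretic analogue for Ding injectives: from Gillespie's work \cite{MR2607410} one has a hereditary complete injective cotorsion pair $(\class W, \DInj(R))$ in the category of right $R$-modules, where $\class W \cap \DInj(R) = \Inj(R)$ and $\class W$ consists of modules of finite fp-injective dimension. Since $R$ is Ding-Chen, \ref{finitistic} says this class also coincides with the modules of finite flat dimension, and moreover supplies a uniform bound $n$ on that dimension for every object of $\class W$.

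The only non-formal step is checking that $\class W$ is closed under (small) products; this is the dual of the coproduct-closure invoked in Theorem \ref{ding-proj-model} and is the spot where the Ding-Chen hypothesis, and in particular the coherence of $R$, really enters. The natural route is to characterize membership in $\class W$ by the vanishing $\Ext^{n+1}_R(F,-)=0$ for all finitely presented $F$. Since $\Ext$ commutes with products in the second variable, this vanishing condition, and hence the class $\class W$ itself, is stable under arbitrary products. (Equivalently, over a coherent ring fp-injectivity is preserved by products, and an induction on the fp-injective dimension, made harmless by the uniform bound $n$ from \ref{finitistic}, propagates this to all of $\class W$.) This is the only step I expect to require genuine care; formally it mirrors, via the Ext/Tor duality, the argument on the projective side.

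With these inputs in place, Theorem \ref{Thm-inj-model} applies to the injective cotorsion pair $(\class W, \DInj(R))$ on the right rooted quiver $Q$, producing a hereditary Hovey triple
\[(\rep_Q(R),\, \rep_Q \class W,\, \Psi(\DInj(R)))\]
on $\rep_Q(R)$. Since Ding-Chen rings are right coherent, Proposition \ref{phi-psi-lemma}(ii) identifies $\Psi(\DInj(R))$ with $\DInj(\rep_Q(R))$, yielding the advertised triple $(\rep_Q(R), \rep_Q\class W, \DInj(\rep_Q(R)))$. The identification of the homotopy category with $\sDInj(\rep_Q(R))$ then follows from the general principle recalled in \ref{h2}: for a hereditary Hovey triple $(\class C, \class W', \class F)$ the model-theoretic homotopy category is canonically equivalent to the stable category of the Frobenius category $\class C \cap \class F$, which here specializes to $\rep_Q(R) \cap \DInj(\rep_Q(R)) = \DInj(\rep_Q(R))$, with projective-injectives $\rep_Q\class W \cap \DInj(\rep_Q(R))$.
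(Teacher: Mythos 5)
Your argument is correct and follows essentially the same route as the paper: start from Gillespie's hereditary injective cotorsion pair $(\class W,\DInj(R))$ with $\class W$ the modules of finite fp-injective dimension, use Fact \ref{finitistic} to get a uniform bound $n$ and hence closure of $\class W$ under products, then apply Theorem \ref{Thm-inj-model} followed by Proposition \ref{phi-psi-lemma}(ii). The only difference is that you spell out the product-closure step (via the Ext-characterization of fp-injective dimension over a coherent ring) where the paper simply cites Fact \ref{finitistic}, and you make explicit the Frobenius-category identification of the homotopy category from \ref{h2}; both elaborations are correct and consistent with the paper.
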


\begin{proof}
Dual to that of \ref{ding-proj-model} where one instead makes use of the hereditary injective cotorsion pair $(\class W,\DInj(R))$ of \cite[Thm.~4.7]{MR2607410}, where $\class W$ consists of all modules of finite fp-injective dimension \cite[Thm~4.2]{MR2607410}. Note that by \ref{finitistic} $\class W$ is closed under products. Then one applies Theorem \ref{Thm-inj-model} and \ref{phi-psi-lemma}(ii).
\end{proof}

\section*{Acknowledgement}
The author would like to thank his PhD advisors, Sergio Estrada and Henrik Holm, for providing comments on a draft version of this manuscript.

\bibliographystyle{amsplain}
\bibliography{models-on-quiver-reps.bib}

\end{document}